\definecolor{blue1}{RGB}{0, 0, 200}
\definecolor{red1}{RGB}{160, 0, 0}
\newcommand{\sppan}[5]{%
\begin{tikzcd}[ampersand replacement = \&, row sep = small]
\& {#3} \ar[dl, "#4"'] \ar[dr, "#5"] \& \\
{#1} \& \& {#2}
\end{tikzcd}
}
\newcommand{\etimes}{\,\times\,}
\newcommand{\erto}{\,\rightarrow\,}
\newcommand{\dynsup}{\mathbf{DynSup}}
\newcommand{\seman}[1]{ [\![ {#1} ]\!]}
\newcommand{\leftskewify}{\iota_{\mathsf{Left}}}
\newcommand{\biskewify}{\iota_{\mathsf{LtoBi}}}
\newcommand{\cats}{\mathcal{S}}
\newcommand{\retseq}[1]{\mathsf{RetSeq}({#1})}
\newcommand{\vdashv}{\vdash^{\mathsf{v}}}
\newcommand{\vdashc}{\vdash^{\mathsf{c}}}
\newcommand{\monmult}[1]{\mathbf{Mult}({#1})}
\newcommand{\ttproduce}{\ensuremath{\mathtt{return}} \ }
\newcommand{\ttskip}{\ensuremath{\mathtt{skip}} \ }
\newcommand{\eretof}[2]{\mathsf{ret}^{#1}_{#2}}
\newcommand{\etoof}[3]{\mathsf{to}^{#1}_{{#2},{#3}}}
\newcommand{\af}{\ }
\newcommand{\nsp}{\hspace{4.5pt}}
\newcommand{\retof}[2]{\mathsf{ret}}
\newcommand{\shortretof}[2]{\mathsf{retpi}^{#1}_{#2}}
\newcommand{\shortretoof}[2]{\mathsf{retpi}}
\newcommand{\toof}[3]{\mathsf{to}}
\newcommand{\piof}[2]{\pi_{{#1},{#2}}}
\newcommand{\pipof}[2]{\pi'_{{#1},{#2}}}
\newcommand{\tuple}[1]{\langle {#1} \rangle}
\newcommand{\ttto}{\  \mathtt{to} \ }
\newcommand{\ttin}{. \ }
\newcommand{\seqof}[1]{\mathbf{Seq}({#1})}
\newcommand{\ulb}{\underline{B}}
\newcommand{\ulc}{\underline{C}}
\newcommand{\lista}{\ora{a}}
\newcommand{\listb}{\ora{b}}
\newcommand{\listc}{\ora{c}}
\newcommand{\listd}{\ora{d}}
\newcommand{\liste}{\ora{e}}
\newcommand{\ora}[1]{\vec{#1}}
\newcommand{\orap}[1]{\accentset{\twoheadrightarrow}{#1}}
\newcommand{\listap}{\orap{a}}
\newcommand{\listbp}{\orap{b}}
\newcommand{\listcp}{\orap{c}}
\newcommand{\nats}{\mathbb{N}}
\newcommand{\catc}{\mathcal{C}}
\newcommand{\catn}{\mathcal{N}}
\newcommand{\catd}{\mathcal{D}}
\newcommand{\cato}{\mathcal{O}}
\newcommand{\catr}{\mathcal{R}}
\newcommand{\catt}{\mathcal{T}}
\newcommand{\pset}{\mathcal{P}}
\newcommand{\setbr}[1]{\{{#1} \}}
\newcommand{\op}{\mathsf{op}}
\newcommand{\forbim}[1]{\mathsf{Forward}_{#1}}
\newcommand{\revbim}[1]{\mathsf{Backward}_{#1}}
\newcommand{\eqdef}{\stackrel{\mbox{\rm {\tiny def}}}{=}}
\newcommand{\twotext}[2]{\begin{array}{l} \text{#1} \\ \text{#2} \end{array}}
\newcommand{\setval}{\mathbf{SetMat}}
\newcommand{\Set}{\mathbf{Set}}
\newcommand{\Span}{\mathbf{Span}}
\newcommand{\Cat}{\mathbf{Cat}}
\newcommand{\smin}{\!\in\!}
\newcommand{\ccolc}{:}
\newcommand{\obs}[1]{|{#1}|}
\newcommand{\unitof}[1]{\eta_{#1}}
\newcommand{\exof}[1]{{#1}^{*}}
\newcommand{\id}{\mathsf{id}}
\newcommand{\terminal}{\mathbf{1}}
\NewDocumentCommand{\nti}{s O{1} m O{} O{A} m O{} O{A} m}{%
 \IfBooleanTF{#1}%
  {%
   \cmorphism[#2]{#3}[#4][#5]{#6}[#7][#8]{#9}[\invtrapeze]%
  }%
  {%
   \morphism[#2]{#3}[#4][#5]{#6}[#7][#8]{#9}[\invtrapeze]%
  }%
}
\NewDocumentCommand{\nto}{s O{1} m m m m m}{
 \IfBooleanTF{#1}%
  {%
   \cmorphism[#2]{#3}[#4~][#4~]{#5}[~#6][~#6]{#7}[\invtrapeze_{c}]%
  }%
  {%
   \morphism[#2]{#3}[#4~][#4~]{#5}[~#6][~#6]{#7}[\invtrapeze_{c}]%
  }%
}
\NewDocumentCommand{\ntile}{s O{1} m O{} O{A} m O{} O{A} m}{
 \IfBooleanTF{#1}%
  {%
    \IfBlankTF{#4}%
    {%
        \cmorphism[#2]{#3}{\lti{#6}~}[#7][#8]{#9}[\invtrapeze^{l}]%
    }
    {%
        \cmorphism[#2]{#3}[#4][#5]{#6}[#7][#8]{#9}[\invtrapeze^{l}]%
    }
  }%
  {%
    \IfBlankTF{#4}%
    {%
        \morphism[#2]{#3}{\lti{#6}~}[#7][#8]{\lti{#9}}[\invtrapeze^{l}]%
    }
    {%
        \morphism[#2]{#3}[#4][#5]{#6}[#7][#8]{\lti{#9}}[\invtrapeze^{l}]%
    }
  }%
}
\NewDocumentCommand{\ntilesans}{O{1} m O{} m m}{
  \IfBlankTF{#3}%
    {%
        \morphism[#1]{#2}{\lti{#4}~}{#5}[\invtrapeze^{l}]%
    }
    {%
        \morphism[#1]{#2}[#3]{#4}{#5}[\invtrapeze^{l}]%
    }
}
\NewDocumentCommand{\ntiri}{s O{1} m O{} O{A} m O{} O{A} m}{
 \IfBooleanTF{#1}%
  {%
    \IfBlankTF{#7}%
    {%
        \cmorphism[#2]{#3}[#4][#5]{~\rti{#6}}{#9}[\invtrapeze^{r}]%
    }
    {%
        \cmorphism[#2]{#3}[#4][#5]{#6}[#7][#8]{#9}[\invtrapeze^{r}]%
    }
  }%
  {%
    \IfBlankTF{#7}%
    {%
        \morphism[#2]{#3}[#4][#5]{~\rti{#6}}{\rti{#9}}[\invtrapeze^{r}]%
    }
    {%
        \morphism[#2]{#3}[#4][#5]{#6}[#7][#8]{\rti{#9}}[\invtrapeze^{r}]%
    }
  }%
}
\NewDocumentCommand{\ntirisans}{O{1} m m O{} m}{%
  \IfBlankTF{#4}%
    {%
        \morphism[#1]{#2}{~\rti{#3}}{#5}[\invtrapeze^{r}]%
    }
    {%
        \morphism[#1]{#2}{#3}[#4]{#5}[\invtrapeze^{r}]%
    }
}
\NewDocumentCommand{\ntilr}{s O{1} m O{} O{A} m O{} O{A} m}{%
 \IfBooleanTF{#1}%
  {%
    \IfBlankTF{#4}%
    {%
        \IfBlankTF{#7}%
        {%
            \cmorphism[#2]{#3}{\lrti{#6}}{#9}[\invtrapeze^{lr}]%
        }
        {%
            \cmorphism[#2]{#3}{\lti{#6}}[#7][#8]{#9}[\invtrapeze^{lr}]%
        }
    }
    {%
        \IfBlankTF{#7}%
        {%
            \cmorphism[#2]{#3}[#4][#5]{\rti{#6}}{#9}[\invtrapeze^{lr}]%
        }
        {%
            \cmorphism[#2]{#3}[#4][#5]{#6}[#7][#8]{#9}[\invtrapeze^{lr}]%
        }
    }
  }%
  {%
    \IfBlankTF{#4}%
    {%
        \IfBlankTF{#7}%
        {%
            \morphism[#2]{#3}{\lrti{#6}}{#9}[\invtrapeze^{lr}]%
        }
        {%
            \morphism[#2]{#3}{\lti{#6}}[#7][#8]{#9}[\invtrapeze^{lr}]%
        }
    }
    {%
        \IfBlankTF{#7}%
        {%
            \morphism[#2]{#3}[#4][#5]{\rti{#6}}{#9}[\invtrapeze^{lr}]%
        }
        {%
            \morphism[#2]{#3}[#4][#5]{#6}[#7][#8]{#9}[\invtrapeze^{lr}]%
        }
    }
  }%
}
\NewDocumentCommand{\ntilrunary}{s O{1} m m m}{
\IfBooleanTF{#1}%
    {%
        \cunarymorphism[#2]{#3}{#4}[\lrti{x}]{#5}[\invtrapeze^{lr}]%
    }
    {%
        \unarymorphism[#2]{#3}{#4}[\lrti{x}]{#5}[\invtrapeze^{lr}]%
    }
}
\newcommand{\lloo}[1]{\color{purple}\acute{\color{black}#1}\color{black}}
\newcommand{\rloo}[1]{\color{purple}\grave{\color{black}#1}\color{black}}
\newcommand{\lrloo}[1]{\color{purple}\hat{\color{black}#1}\color{black}}
\newcommand{\lti}[1]{\smash{\lfloor}{#1}}
\newcommand{\rti}[1]{{#1}\rfloor}
\newcommand{\lrti}[1]{\lfloor{#1}\rfloor}
\newcommand{\longquad}{\quad\quad\quad\quad}
\newcommand{\ttx}{x}
\newcommand{\as}[2]{\frac{\mathrm{#1}}{\mathrm{#2}}}
\newcommand{\ch}[2]{\text{\small{\rm{#1}{#2}}}} 
\newcommand{\multicat}{\mathsf{Mult}}
\newcommand{\hiitem}[1]{\emph{#1:}}
\newcommand{\relarrow}{\, \to\hspace{-0.65em}\shortmid \hspace{0.5em}}
\newcommand{\monsof}[1]{\mathbf{Mon}({#1})}
\newcommand{\unbmon}[1]{\mathbf{UnbiasMon}({#1})}
\newcommand{\xn}{x^{(n)}}
\newcommand{\multof}[1]{m_{#1}}
\newcommand{\lmultof}[1]{m^{\mathsf{L}}_{#1}}
\newcommand{\tmultof}[1]{m^{\mathsf{T}}_{#1}}
\newcommand{\llmultof}[1]{m^{\mathsf{LL}}_{#1}}
\newcommand{\ltmultof}[1]{m^{\mathsf{LT}}_{#1}}
\newcommand{\tlmultof}[1]{m^{\mathsf{TL}}_{#1}}
\newcommand{\ttmultof}[1]{m^{\mathsf{TT}}_{#1}}
\newenvironment{spaceout}[1]{\begin{displaymath}\setlength{\extrarowheight}{3pt}\begin{array}{#1}}{\end{array}\setlength{\extrarowheight}{0pt}\end{displaymath} \noindent}
\newtheorem{theorem}{Theorem}
\newtheorem{definition}{Definition}
\newtheorem{example}{Example}
\newtheorem{proposition}{Proposition}
\theoremstyle{remark}
\newtheorem{remark}{Remark}
\begin{document}

\title{What Is a Monoid?}

\author{Paul Blain Levy}
\orcid{0000-0003-0864-1876}
\affiliation{%
  \institution{School of Computer Science, University of Birmingham}
  \city{Birmingham}
  \country{United Kingdom}}
\email{P.B.Levy@bham.ac.uk}

\author{Morgan Rogers}
\orcid{0000-0002-0277-8217}
\affiliation{%
  \institution{Laboratoire d'Informatique Paris Nord (LIPN), 
  UMR CNRS 7030 Universit\'{e} Sorbonne Paris Nord}
  \city{Villetaneuse 93430}
  \country{France}}
\email{rogers@lipn.univ-paris13.fr}


\begin{abstract}
In many situations one encounters an entity that resembles a monoid. It consists of a carrier and two operations that resemble a unit and a multiplication, subject to three equations that resemble associativity and left and right unital laws. The question then arises whether this entity is, in fact, a monoid in a suitable sense.

Category theorists have answered this question by providing a notion of monoid in a monoidal category, or more generally in a multicategory.  While these encompass many examples, there remain cases which do not fit into these frameworks, such as the notion of relative monad and the modelling of call-by-push-value sequencing.  In each of these examples, the leftmost and/or the rightmost factor of a multiplication or associativity law seems to be distinguished.  

To include such examples, we generalize the multicategorical framework in two stages.  

Firstly, we move to the framework of a left-skew multicategory (due to Bourke and Lack), which generalizes both multicategory and left-skew monoidal category.  The notion of monoid in this framework encompasses examples where only the leftmost factor is distinguished, such as the notion of relative monad.

Secondly, we consider monoids in the novel framework of a bi-skew multicategory.  This encompasses examples where both the leftmost and the rightmost factor are distinguished, such as the notion of a category on a span, and the modelling of call-by-push-value sequencing.

In the bi-skew framework (which is the most general), we give a coherence result saying that a monoid corresponds to an unbiased monoid, i.e. a map from the terminal bi-skew multicategory.
\end{abstract}

\keywords{monoid, skew multicategory, relative monad, call-by-push-value}

\begin{CCSXML}
<ccs2012>
   <concept>
       <concept_id>10003752.10010124.10010131.10010137</concept_id>
       <concept_desc>Theory of computation~Categorical semantics</concept_desc>
       <concept_significance>500</concept_significance>
       </concept>
 </ccs2012>
\end{CCSXML}


\ccsdesc[500]{Theory of computation, Categorical semantics}

\maketitle

\section{Introductions}
\subsection{Programming Language Introduction}
\label{sect:plintro}

In a simple programming language, given programs $M$ and $N$, we can form the program $M;N$ that executes $M$ and then $N$.  This operation on programs, known as \emph{sequencing}, satisfies three equations.  Firstly we have the \emph{associativity} law $(M;N);P = M;(N;P)$.  Writing $\ttskip$ for a program that does nothing, we also have the \emph{left identity} law $\ttskip; M = M$ and the \emph{right identity} law $M; \ttskip = M$.   Thus we see the algebraic structure of a \emph{monoid}.

However, sequencing is not always so simple.  Let us consider \emph{call-by-push-value} (CBPV), a calculus that  exposes the semantic structure underlying various programing paradigms~\cite{Levy:thesisbook}.  As we shall explain, it provides a form of sequencing that is more subtle than the simple kind in two ways.

For readers unfamiliar with CBPV, we explain that it has a distinction between value types $A$ (inhabited by values) and computation types $\ulb$ (inhabited by computations). For each value type $A$, there is a computation type $FA$ inhabited by  computations that aim to return a value of type $A$.  The calculus also provides other computation types, including function types.
  
A context $\Gamma$ consists of declarations $x:A$ of an identifier $x$ with value type $A$.  The judgement $\Gamma \vdashv V:A$ means that $V$ is a value of type $A$.  The judgement $\Gamma \vdashc M:\ulb$ means that $M$ is a computation of type $\ulb$. 
  
The sequencing of $M$ and $N$ is denoted $M \ttto x \ttin N$.  The first subtlety (by comparison with simple sequencing) is that $M$ may return a value, which $x$ is then bound to. The second is that $M$ has type $FA$ whereas $N$ may have any computation type $\ulb$. Meanwhile, any value $V$ gives a computation $\ttproduce V$, and this operation is the counterpart of $\mathtt{skip}$.  

The CBPV typing rules for $F$ types, returning and sequencing, along with three laws that govern these constructs, are shown in Figure~\ref{fig:Flaws}; we call this the `$F$ fragment of CBPV'. The other type and term constructors (and laws) of CBPV will not be relevant in this paper. 

\begin{figure}
    \centering
    $
    \begin{array}{c}
       \begin{prooftree}
           \Gamma \vdashv V :A 
           \justifies
           \Gamma \vdashc \ttproduce V : FA \
       \end{prooftree}
       \quad
       \begin{prooftree}
           \Gamma \vdashc M : FA \quad \Gamma, x:A \vdashc N : \ulb
           \justifies
           \Gamma \vdashc M \ttto x \ttin N : \ulb
       \end{prooftree}
       \quad
       \begin{prooftree}
           \Gamma \vdashv V : A \quad \Gamma, x:A \vdashc N : \ulb
           \justifies
           \Gamma \vdashc (\ttproduce V) \ttto x \ttin N = N [V/x] : \ulb
       \end{prooftree}
       \\ \\
       \begin{prooftree}
     \Gamma \vdashc M : FA  
     \justifies
     \Gamma \vdashc  M \ttto \ttx \ttin \ttproduce \ttx = M : FA
       \end{prooftree}
       \qquad
       \begin{prooftree}
           \Gamma \vdashc M :FA \quad \Gamma, x: A \vdashc N : FB \quad \Gamma, y : B \vdashc P : \ulc
           \justifies
           \Gamma \vdashc (M \ttto x \ttin N) \ttto y \ttin P = M \ttto x \ttin ( N \ttto y \ttin P) : \ulc
       \end{prooftree}
    \end{array}
    $
    \caption{Typing rules and laws for $F$ types in call-by-push-value.}
    \label{fig:Flaws}
    \Description[F typing rules]{Typing rules and laws for $F$ in call-by-push-value.}
\end{figure}

Although CBPV sequencing is more subtle than the simple kind, Figure~\ref{fig:Flaws} is still strikingly reminiscent of the algebraic notion of a monoid.  The three laws are clearly identifiable as left identity, right identity and associativity.  Can we make this precise?

\subsection{Mathematical Introduction} \label{sect:mathintro}

What is a monoid?  Traditionally, it is just a set equipped with an associative binary operation and an identity element.  But this is not the whole story, for in mathematics and computer science, we often encounter other notions that seem to follow a similar pattern, featuring two \emph{operations}: a binary operation identifiable as a multiplication, and a nullary operation identifiable as a unit.  Furthermore, these are required to satisfy three \emph{equational laws}, identifiable as associativity, left identity and right identity. 

Here is one example.\footnote{We use the traditional set/class distinction.  More on that later.}  Fix a class $E$---we may call its elements \emph{objects}.  A \emph{category on $E$}
 consists a family of sets of \emph{morphisms} $(\catc(x,y))_{x, y \in E}$ together with a composition function 
\begin{displaymath}
    ;_{x,y,z} \ccolc \catc(x,y) \times \catc(y,z) \rightarrow \catc(x,z)
\end{displaymath} 
for each $x,y,z \smin E$ and an identity element $\id_x \smin \catc(x,x)$ for all $x \smin E$, satisfying the usual associativity and identity laws. (From here on, we omit the subscript $x$ when it is clear to which object an identity morphism is attached.)
 
In order to encompass such notions, researchers generalized the traditional notion of monoid to that of \emph{monoid in a monoidal category} or---more general still---that of \emph{monoid in a multicategory}.  This has proved to be a successful abstract framework, encompassing such important notions as ring, monad on a given category, and category on a given class.  However, for certain purposes the framework is not general enough.  That is, there are notions that do not fit this framework but still seem to follow the \enquote{monoid} pattern.  We give several examples.
\begin{enumerate}
   \item A \emph{class span} consists of classes $A,B,E$ and functions 
   \[\sppan{A}{B}{E}{f}{g}\]  
   Say that a \emph{category} on this class span consists of a family of sets $(\catc(a,b))_{a \in A, b \in B}$, together with a composition function $;_{a,i,b} \ccolc \catc(a,gi) \times \catc(fi,b) \rightarrow \catc(a,b)$ for all $a \smin A, i \smin E, b \smin B$, and an identity $\id_i \smin \catc(fi,gi)$ for all $i \smin E$, satisfying the laws in Figure~\ref{fig:catonspanlaws}.  This seems to follow the monoid pattern.\footnote{This example is artificial---it neither appears in the literature nor has any known application.  But it illustrates the role of the bi-skew framework, while perhaps being more easily grasped than the CBPV example.}
    \item Two notions of \emph{relative monad} has appeared in the literature: a \enquote{forward} notion in the work of Altenkirch \textit{et al.}~\cite{ACU}, and a \enquote{backward} notion in the work of Spivey~\cite{Spivey}. Each of them seems to follow the monoid pattern.
    \item As explained in Section~\ref{sect:plintro}, the  CBPV treatment of returning and sequencing, displayed in Figure~\ref{fig:Flaws}, follows the monoid pattern.
\end{enumerate}

\begin{figure}
    \centering
    \begin{itemize}
        \item For any $i \smin E$ and $b \smin B$ and $l \ccolc fi \rightarrow b$, we have 
        \begin{eqnarray*}
            \id_i ; l & = & l \phantom{; \id_i}
        \end{eqnarray*}
             \item For any $a \smin A$ and $i \smin E$ and $h \ccolc a \rightarrow gi$, we have
        \begin{eqnarray*}
            h ; \id_i & = &  h \phantom{; \id_i}
        \end{eqnarray*}
        \item For any $a \smin A$ and $i,j \smin E$ and $b \smin B$, given $h \ccolc a \rightarrow gi$ and $k \ccolc fi \rightarrow gj$ and $l \ccolc fj \rightarrow b$, we have
        \begin{eqnarray*}
            (h;k);l & = & h;(k;l)
        \end{eqnarray*}
    \end{itemize}
    \caption{Laws of a category on the class span $(A,B,E,f,g)$}
    \label{fig:catonspanlaws}
    \Description[class span laws]{Laws of a category on the class span $(A,B,E,f,g)$}
\end{figure}

The goal of this paper is to generalize  the  multicategorical framework so as to encompass all of these notions.  This is done in two stages.  The first uses the notion of \emph{left-skew multicategory}~\cite{BL}.  As we shall see, the resulting framework encompasses both notions of relative monad.  There is also a dual notion of \emph{right-skew multicategory}. 

The second stage introduces the notion of \emph{bi-skew multicategory}, which generalizes both left-skew and right-skew multicategory. The progressive generalizations are depicted in Figure~\ref{fig:twocats}. As we shall see, the resulting framework encompasses the notions of category on a class span, and model of the $F$ fragment of CBPV.  Indeed, we know of no natural example of a monoid notion (whether in programming language semantics or elsewhere) that our bi-skew framework excludes.

Note that, in the associativity law of Figure~\ref{fig:catonspanlaws}, the leftmost and rightmost premises are special.  Each premise except the leftmost is a morphism from an $f$ object, and each premise except the rightmost is a morphism to  a $g$ object.  Likewise in Figure~\ref{fig:Flaws}, each premise except the leftmost has an extended context, and each premise except the rightmost has an $F$ type.  As we shall see, this reflects the essential difference between traditional multicategories and their skew variants; the latter allow various kinds of \enquote{tight} morphisms with a special leftmost and/or rightmost input.

\subsection*{Related Work}

Our work builds on that of Bourke and Lack \cite{BL}.  They present left-skew multicategory as an instance of $\catt$-multicategories, where $\catt$ is a $\Cat$-enriched operad. 
Ordinary multicategories are $\terminal$-multicategories, where $\terminal$ is the terminal $\Cat$-operad. They define an operad $\catr$ such that left-skew multicategories are $\catr$-multicategories.  Likewise, right-skew multicategories are $\catr^{\mathrm{rev}}$-multicategories, where $\catr^{\mathrm{rev}}$ is the `reverse' of $\catr$ obtained by reversing the indexing of the composition rule. 
The bi-skew multicategories of the present paper are $\catr \times \catr^{\mathrm{rev}}$-multicategories (which Bourke and Lack do not consider explicitly).

Arkor and McDermott~\cite{Arkor-McDermotupdate} have studied a different notion of skew-multicategory\footnote{The definition in version~\cite{Arkor-McDermotupdate} has been corrected in the published version~\cite{Arkor-McDermot}.} 
in which the forward notion of relative monads can be seen as monoids.  However, that approach would not work for the backward notion of \cite{Spivey}, whereas ours treats $\cato$-relative monads (for $\cato$ a bimodule) in general. See the conclusion for further details regarding their notion. 

    

\subsection*{Structure of Paper}

The paper is structured as follows. After recalling some background concepts in Section \ref{sec:prelims}, we recall the setting of (ordinary) multicategories in Section \ref{sec:multi} and some familiar examples of monoids in multicategories; many familiar examples are derived from the special case of \emph{representable} multicategories, which are those corresponding to monoidal categories. Section \ref{sec:skew} treats a step up in generality, left-skew multicategories, justified in Section \ref{ssec:relativemonad} by an examination of relative monads. Section \ref{sec:biskew} takes us to our final level of generality with bi-skew multicategories, motivated this time by an examination of CBPV in Section \ref{ssec:CBPV}.

Having accomplished the main goal of the paper, we proceed to consider the notion of \emph{unbiased} monoid, 
which includes an $n$-ary operation for each $n \in \nats$. In Section \ref{sec:unbiased}, we  formulate this in the multicategorical setting and indeed to the skew and bi-skew settings.  Happily, in each case, it is equivalent to our notion of monoid.




\begin{figure}
    \centering
\adjustbox{scale=.82,center}{%
\begin{tikzcd}[column sep = small,ampersand replacement=\&]
   \& \twotext{Monoidal categories}{and lax maps} \ar[dl] \ar[d] \ar[dr]\& \\
  \twotext{Left skew-monoidal}{categories, lax maps} \ar[d] \& \text{Multicategories} \ar[dl] \ar[dr]\& \twotext{Right skew-monoidal}{categories, lax maps} \ar[d] \\
 \twotext{Left skew}{multicategories}  \ar[dr] \&\& \twotext{Right skew}{multicategories} \ar[dl]\\
   \& \twotext{Bi-skew}{multicategories}\& 
\end{tikzcd}
}
    \caption{2-categories of multicategory variants and forgetful 2-functors between them}
    \label{fig:twocats}
    \Description[2-cat diagram]{A diagram depicting the relationship between variants of (skew-)monoidal categories and (skew-)multicategories}
\end{figure}

\section{Preliminaries}
\label{sec:prelims}

\subsection{Sets, Classes and Collections}

In category theory, \enquote{size issues} can significantly affect the structures that arise.  We shall see this e.g.\ in Proposition~\ref{prop:egrep} below.  So we explain our conventions.

This paper adopts the ontology of~\cite{AdamekHerrlichStrecker:concretecat}. 
Firstly, we distinguish the notions of \emph{set} and \emph{class}; in particular, we have the class $\mathsf{Set}$ of all sets.  We allow a still more general notion of \emph{collection}; in particular, we have the  collection of all classes, and the collection of all endofunctions on $\mathsf{Set}$.\footnote{The word ``conglomerate'' is used in~\cite{AdamekHerrlichStrecker:concretecat}.}  

This leads to two notions of category:
\begin{itemize}
    \item A \emph{light category} $\catc$ consists of a class $\obs{\catc}$ and hom-set $\catc(x,y)$ for all $x, y \in \obs{\catc}$, with composition and identities.  For example, the category $\mathbf{Set}$ of all sets.
   \item A \emph{hyperlarge category} $\catc$ consists of a collection $\obs{\catc}$ and hom-collection $\catc(x,y)$ for all $x, y \in \obs{\catc}$, with composition and identities.  For example, the category of all classes, and the category of all endofunctors on $\mathbf{Set}$.
\end{itemize}
A hyperlarge category $\catc$ is \emph{moderate} when the object collection and hom-collections are all classes, \emph{small} when they are all sets, and \emph{locally small} when the hom-collections are all sets.   Thus light is the conjunction of moderate and locally small.

The adjectives ``light'' and ``hyperlarge'' will usually be omitted.  For various categorical notions, such as  multicategory,  we define only the light version.  The hyperlarge version is then obtained by replacing ``class'' and ``set'' with ``collection''.








\subsection{Bimodules}

\begin{definition} \label{def:bimodule}
For (light) categories $\catc$ and $\catd$, a \emph{bimodule}\footnote{Also called \emph{distributor} or \emph{profunctor} (sometimes with dual variance conventions).} $\cato \ccolc \catc \relarrow \catd$ consists of the following:
\begin{itemize}
    \item For each $\catc$-object $x$ and $\catd$-object $y$, a set $\cato(x,y)$, called the ``hom-set'' from $x$ to $y$.  An element $g \smin \cato(x,y)$ is called an \emph{$\cato$-morphism} and written $g \ccolc x \rightarrow y$.
   \item For $\catc$-objects $x, y$ and a $\catd$-object $z$ and a $\catc$-morphism $f \ccolc x \rightarrow y$ and an $\cato$-morphism $g \ccolc y \rightarrow z$, the \emph{composite} $\cato$-morphism $f;g \ccolc x \rightarrow z$.
   \item For a $\catc$-object $x$ and $\catd$-objects $y,z$ and an $\cato$-morphism $g \ccolc x \rightarrow y$ and a $\catd$-morphism $h \ccolc y \rightarrow z$, the \emph{composite} $\cato$-morphism $g;h \ccolc x \rightarrow z$.
\end{itemize}
The following equations must be satisfied for any $\cato$-morphism $g \ccolc x \rightarrow y$:
\begin{itemize}
    \item \hiitem{Left and right unitality}
    \[\id_x;g = g = g;\id_y\]
    \item \hiitem{Associativity} for $f' \ccolc x'' \rightarrow x'$, $f \ccolc x' \rightarrow x$ in $\catc$ and $h \ccolc y \rightarrow y'$, $h' \ccolc y' \rightarrow y''$ in $\catd$ we have,
    \begin{align*}
        (f';f);g & = f';(f;g) \\
        (f;g);h & = f;(g;h) \\
        g;(h;h') & = (g;h);h'
    \end{align*}
\end{itemize}
\end{definition}

\noindent
A bimodule $\catc \relarrow \catd$ can be expressed in curried form as a functor $\catc^{\op} \times \catd \rightarrow \mathbf{Set}$.  However, this observation does not adapt to the setting of hyperlarge bimodules between hyperlarge categories, as there is no ``category of all collections'' to replace $\mathbf{Set}$.

\begin{definition} We provide two ways of constructing a bimodule from a functor:
\label{def:forback}
    \begin{enumerate}
        \item A functor $F \ccolc \catc \rightarrow \catd$ yields $\forbim{F} \ccolc \catc \relarrow \catd$ via $\forbim{F}(x,y) \eqdef \catd(Fx,y)$.
        \item A functor $U \ccolc \catd \rightarrow \catc$ yields $\revbim{U} \ccolc \catc \relarrow \catd$ via $\revbim{U}(x,y) \eqdef \catc(x,Uy)$.
    \end{enumerate}
\end{definition}
Note that the construction $U \mapsto \revbim{U}$ is contravariant at the 1-level, whereas the construction $F \mapsto \forbim{F}$ is contravariant at the 2-level. 

\begin{example}
An adjunction $F \dashv U \ccolc \catc \rightarrow \catd$ consists of functors $U \ccolc \catd \rightarrow \catc$ and $F \ccolc \catc \rightarrow \catd$, together with a bimodule isomorphism $\forbim{F} \cong \revbim{U}$.
\end{example}

We sometimes speak of a bimodule $\cato$ from a category to a class, or from a class to a category.  In these cases, an $\cato$-morphism can be composed at one end only.  

We can also speak of one-sided modules: for a category $\catc$, a \emph{left $\catc$-module} provides for each $\catc$-object $x$ a set $\catn(x)$ called the \enquote{hom-set} from $x$.  An element $g \smin \catn(x)$ is written $g \ccolc x \rightarrow \cdot$ and can be composed with a morphism $x' \rightarrow x$ in a way that satisfies the associativity and left-identity laws.  Such a module can be expressed as a functor $\catc^{\op} \rightarrow \Set$.


\subsection{Monoidal and Skew Monoidal Categories}

As further motivation, we recall a generalization of monoidal categories called \emph{left-skew monoidal categories}. These were originally introduced by Szlachányi in \cite{lsmonoidal} and then developed by Street and Lack in a series of papers including \cite{LackStreet1}. More recently, Uustalu, Veltri and Zeilberger proposed a sequent calculus for left-skew monoidal categories \cite{seqcalc}.

\begin{definition}
\label{def:lsmonoidal}
A \emph{left-skew monoidal category} consists of a category $\catc$, a bifunctor $\otimes \ccolc \catc^2 \rightarrow \catc$, an object $I$ and natural transformations
    \begin{spaceout}{clrcl}
        \alpha_{x,y,z} & : & (x \otimes y) \otimes z & \rightarrow & x \otimes (y \otimes z) \\
       \lambda_x & : &  I \otimes x & \rightarrow & x \\
        \rho_x & : & x & \rightarrow & x \otimes I
    \end{spaceout}%
such that the following diagrams commute:
\[\begin{tikzcd}[column sep = small,ampersand replacement=\&]
   (x \otimes I) \otimes y \ar[r, "\alpha_{x,I,y}"]\& x \otimes (I \otimes y) \ar[d,"x \otimes \lambda_y"] \\
   x\otimes y \ar[u, "\rho_x \otimes y"]  \ar[r, "\id"]\& x \otimes y
\end{tikzcd}\;
\begin{tikzcd}[ampersand replacement=\&, column sep = small]
	{(I\otimes x)\otimes y} \& {I \otimes(x \otimes y)} \& {(x \otimes y)\otimes I} \& {x \otimes (y \otimes I)} \\
	\& {x \otimes y} \& {x \otimes y}
	\arrow["{\alpha_{x,y,I}}", from=1-1, to=1-2]
	\arrow["{\lambda_x \otimes y}"', from=1-1, to=2-2]
	\arrow["{\lambda_{x \otimes y}}", from=1-2, to=2-2]
	\arrow["{\alpha_{x,I,y}}", from=1-3, to=1-4]
	\arrow["{\rho_{x \otimes y}}", from=2-3, to=1-3]
	\arrow["{x \otimes \rho_{y}}"', from=2-3, to=1-4]
\end{tikzcd}\]
\[\begin{tikzcd}[ampersand replacement=\&]
   I \otimes I \ar[r, "\lambda_I"]\& I \\
   I \ar[u,"\rho_I"] \ar[ur, "\id"]\& 
\end{tikzcd}
\begin{tikzcd}[ampersand replacement=\&]
	{((x \otimes y) \otimes z) \otimes w} \& {(x \otimes y) \otimes (z \otimes w)} \& {x \otimes (y \otimes (z \otimes w))} \\
	{(x \otimes (y\otimes z))\otimes w} \&\& {x \otimes ((y \otimes z) \otimes w)}
	\arrow["{\alpha_{x \otimes y, z, w}}", from=1-1, to=1-2]
	\arrow["{\alpha_{x,y,z} \otimes w}"', from=1-1, to=2-1]
	\arrow["{\alpha_{x, y, z\otimes w}}", from=1-2, to=1-3]
	\arrow["{\alpha_{x, y\otimes z, w}}", from=2-1, to=2-3]
	\arrow["{x \otimes \alpha_{y,z,w}}"', from=2-3, to=1-3]
\end{tikzcd}\]
\end{definition}
Given left-skew monoidal categories $\catc$ and $\catd$, a \emph{lax map} $\catc \rightarrow \catd$ is a functor $F$ together with a morphism $\varepsilon \ccolc I \rightarrow FI$ and morphisms $\mu_{x,y} \ccolc Fx \otimes Fy \rightarrow F(x \otimes y)$ natural in $x$ and $y$, compatible with the structure.  A \emph{monoidal natural transformation} $\alpha \ccolc (F,\varepsilon,\mu) \Rightarrow (F',\varepsilon',\mu')$ is a natural transformation compatible with the structure.  Thus we obtain a 2-category of left-skew monoidal categories.

Given a left-skew monoidal category, we write e.g.\ $\bigotimes_x (a,b,c)  \eqdef ((x\otimes a) \otimes b) \otimes c$.  To be precise,  $\bigotimes_{x}$ recursively sends the empty list to $x$ and a list $\lista,y$ to $(\bigotimes_{x}\lista) \otimes y$.  We abbreviate $\bigotimes_I$ as $\bigotimes$.

A \emph{monoidal category} is a left-skew monoidal category where $\alpha, \rho, \lambda$ are all isomorphisms.  In this case, the list of coherence requirements can be reduced.

\section{Basic Level of Abstraction: Multicategory}

\label{sec:multi}


\subsection{The 2-category of Multicategories}

We give the notion of multicategory, followed later by its variants.  We formulate these notions using composition at a single input; some authors prefer a formulation using simultaneous composition.
\begin{definition} \label{def:singlemulti}
    A \emph{multicategory} $\catc$ consists of the following data.
    \begin{itemize}
        \item A class $\obs{\catc}$ of objects.
        \item For any object list $\lista = (a_0,\ldots,a_{n-1})$ and object $x$, a set of morphisms $\catc(\lista;x)$. Such a $\catc$-morphism is written $f \ccolc \lista \rightarrow x$ or pictorially as:
        \[\nti*{f}{a_0, \dotsc, a_{n-1}}{x} \quad \text{ or, more concisely, } \quad \nti*{f}{~\ora{a}~}{x}.\]
        Note that the height of these morphisms is arbitrary. We use a double-headed arrow over the name of a list (for example, $\listap$) to indicate that it is assumed non-empty.
        \item \hiitem{Identities} For any object $x$, an \emph{identity} morphism:
        \[\nti*{\id}{~x~}{x}\]
        \item \hiitem{Composition} For any object lists $\listb, \listc, \listd$, objects $x,y$, morphism $f \ccolc \listc \rightarrow x$ and $g \ccolc \listb,x,\listd \rightarrow y$, a composite morphism\footnote{Strictly speaking, the composition symbol should be indexed by $\listb,\listc,\listd,x,y$, as $x$ may appear several times in the domain and homsets may not be disjoint. We rely on positioning in the diagrams to resolve this ambiguity.}:
        \begin{equation}
        \label{eq:compose}
            \nti*[2]{g {\circ_x} f}{~\listb,~\listc,~\listd~}{y} ~\eqdef~ \nti*{g}{\listb,\nti{f}{~\listc~}{x},\listd}{y}
        \end{equation}
    \end{itemize}
    The following equations must be satisfied.
    \begin{itemize}
        \item \hiitem{Associativity} for $h \ccolc \lista,y,\liste \rightarrow z$ and $f,g$ as above,
    \[ \nti*[2]{h \circ_y g}{\lista,\listb, \nti{f}{~\listc~}{x}, \listd,\liste}{z}
    \nsp = \nsp \nti*{h}{\lista, \nti[2]{g{\circ_x}f}{~\listb,~\listc,~\listd~}{y}, \liste}{z}\]
    \item \hiitem{Interchange} for $f \ccolc \listb \rightarrow x$, $g\ccolc\listd \rightarrow y$ and $h\ccolc \lista,x,\listc,y,\liste \to z$:
    \[ \nti*{h \circ_x f}{\lista, \listb, \listc, \nti{g}{~\listd~}{y}, \liste}{z}
    \nsp = \nsp \nti*{h \circ_y g}{\lista, \nti{f}{~\listb~}{x}, \listc, \listd, \liste}{z}\]
    \item \hiitem{Left and right identities} for morphisms $f \ccolc \lista, x, \listb \rightarrow y$ and $g \ccolc \listc \rightarrow x$ we have the respective equations:
    \[ \nti*{f}{\lista, \nti{\id}{~x~}{x}, \listb}{y} \nsp = \nti*[2]{f}{\lista, x, \listb}{y}
    \qquad \qquad
    \nti*{\id}{\nti{g}{~\listc~}{x}}{x} \nsp = \nsp \nti*[2]{g}{~\listc~}{x}\]
    \end{itemize}
\end{definition}
We obtain a 2-category of multicategories as follows.
\begin{definition}
 Let $\catc$ and $\catd$ be multicategories.
 \begin{enumerate}
    \item A \emph{map} $F \ccolc \catc \rightarrow \catd$ sends each $\catc$-object $a$ to a $\catd$-object $Fa$, and sends each $\catc$-morphism $f \ccolc a_0,\dotsc,a_{n-1} \rightarrow b$ to a $\catd$-morphism $Ff \ccolc Fa_0,\dotsc,Fa_{n-1} \rightarrow Fb$ in a way that preserves identities and composition.
    \item Given maps $F,G \ccolc \catc \rightrightarrows \catd$, a \emph{2-cell} $\alpha \ccolc F \Rightarrow G$ sends each $\catc$-object $x$ to a $\catd$-morphism $\alpha_x \ccolc Fx \rightarrow Gx$ that is natural in the sense that, for any $\catc$-morphism $f \ccolc a_0,\ldots,a_{n-1}\rightarrow b$, the following equation holds:
    \[
        \cunarymorphism{\alpha_b}{\nti{f}{Fa_0,\ldots,Fa_{n-1}}{Fb}}[Fb]{Gb} ~=~ \nti*{Gf}{\nti{\alpha_{a_0}}{Fa_0}{Ga_0},\ldots,\nti{\alpha_{a_{n-1}}}{Fa_{n-1}}{Ga_{n-1}}}{Gb}
    \]
 \end{enumerate}
\end{definition}
The $2$-category of multicategories is denoted $\multicat$.  It has all (set-indexed) products; in particular it has a terminal object, denoted $1$.

Now we come to the main definition of the section, which has appeared e.g.\  in~\cite[page 401]{Leinster:genenrich}.
\begin{definition} \label{def:monoidmulti}
    Let $\catc$ be a multicategory.  
    \begin{enumerate}
        \item A \emph{monoid} in $\catc$ consists of a $\catc$-object $x$, a  map $m \ccolc x,x \rightarrow x$, the \emph{multiplication}, and a map $e \ccolc \: \rightarrow x$, the \emph{unit}, satisfying the following equations:
    \[\nti*{m}{\nti{e}{\nsp}{~x~},x}{x} ~=~ \nti*[2]{\id}{~x~}{x} ~=~ \nti*{m}{x, \nti{e}{\nsp}{~x~}}{x}
    \longquad
    \nti*{m}[\nti{m}{x,x}{x}]{x}{x} ~=~ \nti*{m}{x}[\nti{m}{x,x}{x}]{x}\]
    \item  For monoids $M = (x,m,e)$ and $N = (y,n,u)$, a  \emph{homomorphism} $M \rightarrow N$ is a unary morphism $f\ccolc x \rightarrow y$ satisfying the equations:
    \[ \nti*{f}{\nti{e}{\nsp}{\,x\,}}{y} ~=~ \nti*[2]{u}{\nsp}{\,y\,} \quad \quad \quad \quad \cunarymorphism{f}{\nti{m}{x,x}{x}}[\,x\,]{y} ~=~ \nti*{n}{\nti{f}{\,x\,}{y},\nti{f}{\,x\,}{y}}{y} \]%
   \item We write $\monsof{\catc}$ for the category of monoids and homomorphisms in $\catc$.
\end{enumerate}
\end{definition}
Here are some examples.
\begin{enumerate}
\item Write $\mathbf{Set}$ for the multicategory of sets, where a morphism $A_0,\dotsc, A_{n-1} \rightarrow B$ is simply a  function $A_0 \times \cdots \times A_{n-1} \rightarrow B$.  A monoid in $\mathbf{Set}$ is just a monoid in the traditional sense.
\item Write $\mathbf{Ab}$ for the multicategory of abelian groups $(X,+,0)$; a morphism $A_0,\dotsc, A_{n-1} \rightarrow B$ is a function $A_0 \times \cdots \times A_{n-1} \rightarrow B$ that preserves $+$ and $0$ in each argument. A monoid in $\mathbf{Ab}$ is just a {ring}.
\item A \emph{complete lattice} is a poset where every subset has a supremum (or equivalently, every subset has an infimum).  Write $\mathbf{SupLat}$ for the multicategory of complete lattices, where a morphism $A_0,\dotsc, A_{n-1} \rightarrow B$ is a function $A_0 \times \cdots \times A_{n-1} \rightarrow B$ preserving suprema in each argument.  A monoid in $\mathbf{SupLat}$ is a \emph{quantale}.
\item A \emph{set-valued matrix} $X \ccolc A \relarrow B$, for classes $A$ and $B$, is a family of sets $(X(a,b))_{a \in A, b\in B}$.  For a class $E$, write  $\setval_E$ for the multicategory of set-valued endomatrices on $E$, where a morphism $X_0,\ldots,X_{n-1} \to Y$ is a function sending each list $a_0,\ldots,a_{n} \smin E$ and list $x_0,\dotsc,x_{n-1}$ with $x_i \in X_i(a_i,a_{i+1})$ to an element of $Y(a_0, a_n)$.  A monoid in $\setval_E$ is just a category on $E$ as described in Section \ref{sect:mathintro}.\footnote{Put differently, a category is a \emph{monad} in the hyperlarge \emph{virtual bicategory} of classes and set-valued matrices.  Likewise, a {small} category is a monad in the bicategory of sets and small-valued matrices, or equivalently in $\Span(\Set)$.}
\end{enumerate}

Any monoidal category $\catc$ gives a multicategory $\monmult{\catc}$ with the same objects: a morphism $f \ccolc \lista \to b$ is a $\catc$-morphism $\bigotimes \lista \rightarrow b$.  Hermida~\cite[\S9.2]{Hermida} showed that the multicategories so obtained are the \emph{representable} ones.
\begin{definition}
\label{def:representable}
In a multicategory, a \textbf{tensor} of a list of objects $\listb$ consists of an object $v$ and morphism $p \ccolc \listb \to v$ such that, for any lists of objects $\lista$ and $\listc$, object $x$ and morphism $f \ccolc \lista, \listb, \listc \to x$, there exists a unique morphism $g \ccolc \lista,v,\listc \to x$ with:
\[
    \nti*[2]{f}{\lista,~\listb,~\listc}{x}
    ~=~
    \nti*{g}{~\lista,\nti{p}{~\listb~}{v},~\listc~}{x}
\]
Note that the tensor $(v,p)$ is unique up to unique isomorphism.  A multicategory is \textbf{representable} when every list of objects has a specified tensor.
\end{definition}
 
\begin{proposition} \label{prop:egrep}
Considering the above examples, the multicategories $\Set$, $\mathbf{Ab}$ and $\mathbf{SupLat}$ are representable, with tensors given by cartesian products in $\Set$ and respective tensor products in $\mathbf{Ab}$ and $\mathbf{SupLat}$. For a class $E$, the multicategory $\setval_E$ is representable if and only if $E$ is a set.
\end{proposition}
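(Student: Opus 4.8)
The plan is to verify, in each case, the universal property of Definition~\ref{def:representable}. For $\Set$, $\mathbf{Ab}$ and $\mathbf{SupLat}$ I would observe that each of these multicategories is of the form $\monmult{\catc}$ for the monoidal category $\catc$ given respectively by $(\Set,\times)$, $(\mathbf{Ab},\otimes_{\mathbb{Z}})$ and $(\mathbf{SupLat},\otimes)$. Indeed, in each case a morphism $A_0,\dotsc,A_{n-1}\to B$ is by definition a function that is \enquote{multilinear} in the relevant sense (respectively arbitrary, additive, or sup-preserving in each argument separately), and this is precisely the universal property characterising maps out of the corresponding tensor of the list. Hence, by Hermida's representability result recalled above, all three are representable, with tensors given by cartesian product, tensor product of abelian groups, and tensor product of complete lattices. (For $\Set$ this is immediate without invoking Hermida, since a multimorphism $\lista,\listb,\listc\to x$ is literally a function out of $\prod\lista\times\prod\listb\times\prod\listc$, so the product $\prod\listb$ with the identity tupling already exhibits the tensor.)

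For $\setval_E$ with $E$ a set, I would exhibit the tensor of $(X_0,\dotsc,X_{m-1})$ as the \enquote{matrix product}
\[ v(a,b) \;=\; \coprod_{a_1,\dotsc,a_{m-1}\smin E}\; X_0(a,a_1)\times X_1(a_1,a_2)\times\cdots\times X_{m-1}(a_{m-1},b),\]
which is a set exactly because $E$ is, being a set-indexed coproduct of finite products of sets (the cases $m=0,1$ giving the identity matrix and $X_0$ itself). The universal $p$ sends each choice of intermediate objects and elements to the matching coproduct injection, and is a bijection between that data and the elements of $v(a,b)$; so any $f\ccolc\lista,\listb,\listc\to Y$ factors through $p$ via the unique $g\ccolc\lista,v,\listc\to Y$ obtained by reading off $f$ on each summand.

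For $\setval_E$ with $E$ a proper class, it suffices to show that some single list has no tensor, since representability demands a tensor for every list. I would take the list $(T,T)$, where $T(a,b)=\{*\}$ for all $a,b\smin E$, and fix objects $s,t\smin E$. At the output pair $(s,t)$, a morphism $T,T\to Y$ is exactly a family $(f_{a_1})_{a_1\smin E}$ with $f_{a_1}\smin Y(s,t)$, since the forced inhabitants of $T(s,a_1),T(a_1,t)$ impose no constraint and $a_1$ ranges over all of $E$. Supposing $(v,p)$ were a tensor, set $\phi\ccolc E\to v(s,t)$, $a_1\mapsto p_{s,a_1,t}$; the universal property says that $g\mapsto g\circ\phi$ must put functions $v(s,t)\to Y(s,t)$ in bijection with such families, for every $Y$. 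Taking $Y(s,t)$ with at least two elements, uniqueness of the factorisation forces $\phi$ surjective and existence of a factorisation forces $\phi$ injective, so $\phi\ccolc E\cong v(s,t)$. As $v(s,t)$ is a set, $E$ would be a set, contradicting the hypothesis.

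This last step is the main obstacle: the representability of $\Set$, $\mathbf{Ab}$ and $\mathbf{SupLat}$, and the positive direction for small $E$, are routine consequences of known universal properties, whereas refuting representability for a proper class $E$ requires arranging the universal property so that it forces a bijection between $E$ and an honest set.
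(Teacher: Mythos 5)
Your proposal is correct, and for the positive parts ($\Set$, $\mathbf{Ab}$, $\mathbf{SupLat}$ via the universal properties of the usual tensor products, and the matrix-product formula for $\setval_E$ with $E$ a set) it matches the paper, which simply records these as well known. The interesting comparison is in the ``only if'' direction. The paper picks $e \smin E$ and uses the indicator matrices $A = (1_{x=e})_{x,y}$ and $B = (1_{z=e})_{y,z}$, so that a morphism $A,B \rightarrow Y$ carries data \emph{only} at the output pair $(e,e)$; it then shows $x \mapsto f_{e,x,e}(*,*)$ is an injection $E \rightarrow C(e,e)$ using just the \emph{existence} half of the factorization property against a two-valued target. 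You instead use the constant singleton matrix $T$ and extract a full bijection $E \cong v(s,t)$, using both existence (for injectivity of $\phi$) and uniqueness (for surjectivity); only the injection is actually needed to conclude $E$ is a set. Your route has one glossed-over step: a morphism $T,T \rightarrow Y$ is a family indexed over \emph{all} output pairs $(a_0,a_2)$, not just $(s,t)$, so the universal property a priori gives a bijection of the whole hom-collections, and you must still argue that its $(s,t)$-component is itself a bijection. This is easily repaired --- either take $Y$ to be a singleton away from $(s,t)$ so that morphisms into $Y$ are determined by their $(s,t)$-component, or observe that precomposition with $p$ decomposes as a product over $(a_0,a_2)$ of maps between nonempty sets, so bijectivity of the product forces bijectivity of each factor --- but as written it is an implicit gap. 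The paper's choice of concentrated matrices is precisely what makes this issue disappear.
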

\begin{proof}
Representability of $\Set$, $\mathbf{Ab}$ and $\mathbf{SupLat}$ is well-known, as is that of $\setval_E$ for a set $E$.

Conversely, suppose $\setval_E$ representable.
If $E$ is empty, then it is a set; otherwise, $E$ has an element $e$.  Define matrices $A \eqdef (1_{x=e})_{x,y \in E}$ and $B \eqdef (1_{z=e})_{y,z \in E}$.  By hypothesis, the list $A,B$ has a tensor $(C,f)$.  We show that the function $E \rightarrow C(e,e)$ sending $x \mapsto f_{e,x,e}(*,*)$ is injective.  Since $C(e,e)$ is a set, it follows that $E$ is too.

Let $D$ be the matrix $(2)_{x, y \in E}$.  For any $r,s \smin E$ such that $f_{e,r,e}(*,*) =f_{e,s,e}(*,*)$, let $g \ccolc A,B \rightarrow D$ be the map that at $e,x,e$ sends $(*,*)$ to $0$ if $x=r$ and $1$ otherwise.  We know that $g$ factorizes as $f$ followed by some $h \ccolc C \rightarrow D$, so we have
\[g_{e,s,e}(*,*) = h_{e,e}(f_{e,s,e}(*,*)) = h_{e,e}(f_{e,r,e}(*,*)) = g_{e,r,e}(*,*) = 0 \]
so $s=r$ as required.
\end{proof}
A conclusion of Proposition \ref{prop:egrep} is that while many familiar examples of multicategories are representable, size issues may force us to consider multicategories which are not representable.

Lastly in this section, we note that \emph{any} category $\catc$ is a category of monoids.  To this end, we write $\seqof{\catc}$ for the multicategory\footnote{This is called a \emph{sequential multicategory} in~\cite{Pisani}, since a morphism is a sequence.} with the same objects as $\catc$, where a morphism from $\lista = (a_0,\ldots,a_{n-1})$ to object $x$ is a list of $\catc$-morphisms $(f_i \ccolc a_i \rightarrow x)_{i < n}$.  Composition and identities are inherited from $\catc$ in the obvious way.  
\begin{proposition}
    For any category $\catc$, the forgetful functor $\monsof{\seqof{\catc}} \rightarrow \catc$ is an isomorphism.
\end{proposition}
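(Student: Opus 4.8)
The plan is to unwind the definition of a monoid in $\seqof{\catc}$ and show that the monoid structure on a given object is uniquely determined, so that the forgetful functor is a bijection on objects and on each hom-set, hence an isomorphism of categories.

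First I would identify the data. A monoid $(x,m,e)$ in $\seqof{\catc}$ has carrier an object $x$ of $\catc$; its unit $e \ccolc \rightarrow x$ is a nullary $\seqof{\catc}$-morphism, i.e.\ an empty list of $\catc$-morphisms into $x$, of which there is exactly one, so $e$ carries no data; and its multiplication $m \ccolc x,x \rightarrow x$ is a binary $\seqof{\catc}$-morphism, i.e.\ a pair $(m_0, m_1)$ of $\catc$-endomorphisms of $x$.

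Next I would apply the identity laws, using that composition in $\seqof{\catc}$ with a nullary morphism simply deletes the corresponding input slot (there are no inner component morphisms with which to compose). Plugging $e$ into the left slot of $m$ thus yields the unary morphism $(m_1)$, and plugging it into the right slot yields $(m_0)$; the left and right identity laws therefore force $m_1 = \id_x$ and $m_0 = \id_x$. With the multiplication pinned down to $(\id_x, \id_x)$, the associativity law becomes an equality of ternary morphisms each of whose components is a composite of identities, and so holds automatically. Hence every object carries exactly one monoid structure, giving a bijection between monoids and objects.

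For morphisms, a homomorphism between the monoids on $x$ and $y$ is a unary $\seqof{\catc}$-morphism, i.e.\ a single $\catc$-morphism $f_0 \ccolc x \rightarrow y$, and I would check that both homomorphism equations hold automatically: the unit equation equates two nullary morphisms into $y$, which are equal by uniqueness, while the multiplication equation equates two binary morphisms that both reduce to $(f_0, f_0)$ once the multiplications are identities. Thus homomorphisms correspond bijectively to $\catc$-morphisms. Since the forgetful functor sends $(\id_x)$ to $\id_x$ and sends the $\seqof{\catc}$-composite of unary morphisms to the ordinary $\catc$-composite of their components, it preserves identities and composition; being bijective on objects and on hom-sets, it is an isomorphism. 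The only delicate point is the bookkeeping of composition in $\seqof{\catc}$---above all the fact that composing with a nullary morphism deletes a slot---after which every law reduces to a one-line verification.
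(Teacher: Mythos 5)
Your proposal is correct and follows essentially the same route as the paper: the unit is forced to be the unique empty list, the unit laws pin the multiplication down to $(\id_x,\id_x)$, and every $\catc$-morphism is automatically a homomorphism. You simply spell out the slot-deletion bookkeeping and the automatic verification of associativity and the homomorphism equations, which the paper leaves implicit.
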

\begin{proof}
    Each object $x$ carries a unique monoid structure in $\seqof{\catc}$. Indeed, there is only one possible choice for with unit, the empty list $()$. Given a prospective binary multiplication morphism $(a,b)$, the unit axioms then impose that $a = \id_x = b$, so $(\id_x,\id_x)$ is the unique possibility.  We also see that each $\catc$-morphism $x \rightarrow y$ is a homomorphism. 
\end{proof}

A tensor for a list in $\seqof{\catc}$ is easily seen to be a coproduct of the objects in the list, so $\seqof{\catc}$ is representable if and only if $\catc$ has finite coproducts, so we recover the following well-known result.

\begin{corollary}
Let $\catc$ have finite coproducts. The forgetful functor $\monsof{\catc} \rightarrow \catc$ is an isomorphism.
\end{corollary}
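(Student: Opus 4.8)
The plan is to reduce the corollary to the preceding Proposition by exhibiting an isomorphism of multicategories $\monmult{\catc} \cong \seqof{\catc}$. Here $\monmult{\catc}$ denotes the multicategory arising from the cocartesian monoidal structure $(\catc,+,0)$ that the assumed finite coproducts provide, and $\monsof{\catc}$ in the statement is to be read as $\monsof{\monmult{\catc}}$, the monoids in that monoidal category.

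First I would compare the two multicategories on the common object class $\obs{\catc}$. By definition, a morphism $\lista \to b$ in $\monmult{\catc}$ is a $\catc$-map out of the iterated coproduct, $\bigotimes\lista = a_0 + \cdots + a_{n-1} \to b$, whereas a morphism $\lista \to b$ in $\seqof{\catc}$ is a tuple $(a_i \to b)_{i<n}$. The universal property of the coproduct supplies a bijection between these two sets, sending a map out of the coproduct to the family of its composites with the injections, and a tuple to its copairing. This is exactly the observation recorded in the remark that a tensor in $\seqof{\catc}$ is a coproduct.

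Second, I would check that this family of bijections is an isomorphism of multicategories, that is, that it preserves identities and single-input composition. Preservation of identities is immediate. For composition one unfolds the composite in $\monmult{\catc}$ (inherited from the monoidal structure) on one side and the componentwise composite of $\seqof{\catc}$ on the other, then compares the two along the coproduct injections using the universal property; both collapse to the same $\catc$-composites. This bookkeeping is the only part requiring genuine work, so it is where I would expect the main (albeit routine) obstacle to lie.

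With $\monmult{\catc} \cong \seqof{\catc}$ established, the induced isomorphism $\monsof{\monmult{\catc}} \cong \monsof{\seqof{\catc}}$ of categories of monoids, post-composed with the isomorphism $\monsof{\seqof{\catc}} \to \catc$ of the preceding Proposition, yields the claimed isomorphism $\monsof{\catc} \to \catc$. As a concrete sanity check, the transported monoid structure on an object $x$ is the codiagonal $\nabla_x \ccolc x + x \to x$ together with the unique unit $0 \to x$, and every $\catc$-morphism respects these universal maps automatically, exactly as in the uniqueness argument of the Proposition.
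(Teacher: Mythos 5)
Your proposal is correct and follows essentially the same route as the paper, which justifies the corollary by observing that a tensor in $\seqof{\catc}$ is a coproduct, so that when $\catc$ has finite coproducts the multicategory induced by the cocartesian monoidal structure is identified with $\seqof{\catc}$ and the preceding proposition applies. The only cosmetic difference is that you construct the multicategory isomorphism by hand, whereas the paper phrases the same fact via representability of $\seqof{\catc}$.
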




\section{Intermediate Level: Skew Multicategory}
\label{sec:skew}

\subsection{Digression: Relative Monads}
\label{ssec:relativemonad}

We leave the multicategory story for a while, to learn about the notion of relative monad.  (Later, we shall explain how to see this as another monoid example.) We present it in a way that unifies two versions of this notion from the literature.

Recall first that a monad on a category $\catc$ may (especially in computer science) be given as a \emph{Kleisli triple} $(T,\eta,{}^{*})$, where $T$ is given on objects only, $\eta$ is a family of maps $\unitof{x} \ccolc x \to Tx$ and ${}^{*}$ sends any morphism $f \ccolc x \rightarrow Ty$ to its \emph{Kleisli extension} $f^{*} \ccolc Tx \rightarrow Ty$.  The following definition is arranged in a similar way. 
\begin{definition} \label{def:relmonadclass}
   Let $\cato$ be a bimodule from a category $\catc$ to a category $\catd$.  An \emph{$\cato$-relative monad} $(T,\eta,{}^*)$ consists of the following data:
      \begin{itemize}
    \item For each $x \smin \catc$, a $\catd$-object $Tx$ and $\cato$-morphism $\unitof{x} \ccolc x \rightarrow Tx$.
    \item For each $x,y \smin \catc$ and $\cato$-morphism $k \ccolc x \rightarrow Ty$, a $\catd$-morphism $\exof{k} \ccolc Tx \rightarrow Ty$.
    \end{itemize}
    The following conditions must be satisfied.
    \begin{itemize}
        \item For each $x,y \smin \catc$ and $k \ccolc x \rightarrow Ty$, this triangle commutes:
        \[ \begin{tikzcd}[ampersand replacement=\&]
        x \arrow[r,"\eta_x"] \ar[dr,"k"']\& Tx \ar[d,"k^{*}"] \\
        \& Ty
        \end{tikzcd} \]%
    \item For each $x \smin \catc$, the parallel pair
    \begin{tikzcd}[ampersand replacement=\&]
        Tx \ar[r, bend left, "\eta_{x}^{*}"] \ar[r, bend right, "\id"']\& Tx
    \end{tikzcd} commutes, which is to say $\eta_{x}^* = \id_x$.
    \item For $x,y,z \smin \catc$ and $k \ccolc x \rightarrow Ty$ and $l \ccolc y \rightarrow Tz$, this triangle commutes:
    \[ \begin{tikzcd}[ampersand replacement=\&]
        Tx \arrow[r,"k^{*}"] \ar[dr,"(k;l^{*})^{*}"']\& Ty \ar[d,"l^{*}"] \\
       \& Tz
    \end{tikzcd} \]%
    \end{itemize}
\end{definition}

Note that $\catc$-morphisms are ignored in this definition---we could even take $\cato$ to be a bimodule from a \emph{class} to a category.  Although ignoring $\catc$-morphisms may seem odd, the following result justifies it.
\begin{proposition} \label{prop:extrelmonad}
   For any $\cato$-relative monad  $(T,\eta,^{*})$, the function $T$ uniquely extends to a functor $\catc \rightarrow \catd$ making $\eta$ and $^{*}$ natural in the sense that for $\catc$-morphisms $f:u \to x$, $g:x \to Ty$ and $h:y \to z$, the following diagrams commute:
    \[ \begin{tikzcd}[ampersand replacement=\&]
            u \ar[r, "\eta_u"] \ar[d, "f"']\& Tu \ar[d, "Tf"] \\
            x \ar[r, "\eta_x"']\& Tx
    \end{tikzcd}
    \qquad
    \begin{tikzcd}[ampersand replacement=\&]
      Tu \ar[r,"Tf"] \ar[dr,"(f;g)^{*}"']\& Tx \ar[d,"g^{*}"]\\
     \& Ty
    \end{tikzcd}
    \qquad
    \begin{tikzcd}[ampersand replacement=\&]
      Tx \ar[r,"g^{*}"] \ar[dr,"(g;Th)^*"']\& Ty \ar[d,"Th"] \\
     \& Tz.
    \end{tikzcd} \]
\end{proposition}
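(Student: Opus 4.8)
The plan is to read off the action of $T$ on morphisms by mimicking the Kleisli description of an ordinary monad. For a $\catc$-morphism $f \ccolc u \to x$, the bimodule composite $f;\unitof{x} \ccolc u \to Tx$ is an $\cato$-morphism, so its extension $\exof{(f;\unitof{x})} \ccolc Tu \to Tx$ is a $\catd$-morphism, and I set $Tf \eqdef \exof{(f;\unitof{x})}$. This choice is in fact forced, which disposes of uniqueness at once: the second naturality square, specialized to $g = \unitof{x}$, reads $\exof{(f;\unitof{x})} = Tf;\exof{\unitof{x}}$, and $\exof{\unitof{x}} = \id$ by the second monad axiom, so any functor extending $T$ on objects and making the squares commute must send $f$ to $\exof{(f;\unitof{x})}$.

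It then remains only to verify that this assignment is functorial and that the three squares commute; each verification is a short calculation combining the three monad axioms with the associativity and unitality of bimodule composition. I would first dispatch the unit-naturality square (the first square), since it recurs later: for $f \ccolc u \to x$ it asserts $\unitof{u};Tf = f;\unitof{x}$, which is exactly the first monad axiom applied to $k = f;\unitof{x}$.

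For functoriality, preservation of identities is immediate: $\id_x;\unitof{x} = \unitof{x}$ by unitality, so $T\id_x = \exof{\unitof{x}} = \id$. For a composite $f \ccolc u \to x$ followed by $f' \ccolc x \to w$, I would apply the third monad axiom with $k = f;\unitof{x}$ and $l = f';\unitof{w}$ (so that $\exof{l} = Tf'$), giving $Tf;Tf' = \exof{((f;\unitof{x});Tf')}$, and then rewrite the argument using associativity and the first square: $(f;\unitof{x});Tf' = f;(\unitof{x};Tf') = f;(f';\unitof{w}) = (f;f');\unitof{w}$, whose extension is $T(f;f')$. The remaining two squares are equally direct: the square $\exof{(f;g)} = Tf;\exof{g}$ (for $g \ccolc x \to Ty$) comes from the third axiom with $k = f;\unitof{x}$ and $l = g$, after simplifying $(f;\unitof{x});\exof{g} = f;(\unitof{x};\exof{g}) = f;g$ by associativity and the first axiom; and the square $\exof{(g;Th)} = \exof{g};Th$ (for $h \ccolc y \to z$) is literally the third axiom with $k = g$ and $l = h;\unitof{z}$, because $\exof{l} = Th$ by the definition of $T$ on morphisms.

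The entire argument carries no real obstacle, since the only genuine decision is the definition of $T$ on morphisms, which the second square forces. The one step that warrants care is preservation of composition, where the first and third monad axioms must be interleaved with the bimodule's associativity in the correct order; presenting the unit-naturality square beforehand keeps that calculation clean.
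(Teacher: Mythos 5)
Your proposal is correct and follows essentially the same route as the paper: the paper likewise defines $Tf \eqdef (f;\eta_x)^{*}$, derives uniqueness from the triangle for $g^{*}$ together with $\eta_x^{*} = \id$, and verifies functoriality and the three diagrams by the same interleaving of the three relative-monad axioms with bimodule associativity (presented there as pasted commutative diagrams rather than equational chains).
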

\begin{proof}
A functor extending the function $T$ must send $k \ccolc x \rightarrow y$ to $(k;\eta_y)^{*}$ by commutativity of
\[\begin{tikzcd}[ampersand replacement=\&]
    Tx \ar[dr,"Tk"'] \ar[rr,"(k;\eta_y)^{*}"]\&\& Ty \\
   \& Ty \ar[ur, "\eta_x^{*}"] \ar[ur, bend right, "\id"'] 
\end{tikzcd}.\]
Conversely, for $k\ccolc x \rightarrow y$, define $Tk \ccolc Tx \rightarrow Ty$ to be $(k;\eta_y)^{*}$.  We verify the requirements:
\begin{itemize}
    \item $T$ preserves the identity at $x \smin C$ by commutativity of 
\[\begin{tikzcd}[ampersand replacement=\&]
    Tx \ar[r, bend left=40, "T\id"] \ar[r, "\eta_x^{*}"] \ar[r, bend right=40, "\id"']\& Tx.
\end{tikzcd}\]%
\item $T$ preserves the composite $\begin{tikzcd}[ampersand replacement=\&]x \ar[r, "k"]\& y \ar[r, "h"]\& z
\end{tikzcd}$%
by commutativity of 
\[\begin{tikzcd}[ampersand replacement=\&]
	Tx \&\&\&\& Ty \\
	\\
	\&\&\&\& Tz
	\arrow["{(k;\eta_y)^*}", from=1-1, to=1-5]
	\arrow["Tk", bend left = 20, from=1-1, to=1-5]
	\arrow["{(k;\eta_y;(h;\eta_y)^*)^*}"{pos=0.4}, from=1-1, to=3-5]
	\arrow["{(k;h;\eta_z)^*}"{description}, bend right = 15, from=1-1, to=3-5]
	\arrow["{T(k;h)}"', bend right = 30, from=1-1, to=3-5]
	\arrow["{(h;\eta_z)^*}"', from=1-5, to=3-5]
	\arrow["Th", bend left = 20, from=1-5, to=3-5]
\end{tikzcd}\]
\item The square commutes by commutativity of 
\[\begin{tikzcd}[ampersand replacement=\&]
	u \&\& Tu \\
	x \&\& Tx
	\arrow["{\eta_u}", from=1-1, to=1-3]
	\arrow["f"', from=1-1, to=2-1]
	\arrow["{(f;\eta_x)^*}"', from=1-3, to=2-3]
	\arrow["Tf", bend left = 30, from=1-3, to=2-3]
	\arrow["{\eta_x}"', from=2-1, to=2-3]
\end{tikzcd}\]
\item The triangles commute by commutativity of 
\[\begin{tikzcd}[column sep = 4em,ampersand replacement=\&]
    Tu \ar[rr, bend left=20, "Tf"] \ar[rr, "(f;\eta_x)^{*}"]\ar[ddrr, bend right=15, "(f;g)^{*}"'] \ar[ddrr, "(f;\eta_x;g^{*})^{*}"]
\&\& Tx \ar[dd, "g^{*}"] \\
   \&\& \\
   \&\& Ty,
\end{tikzcd}
\quad \text{and} \quad
\begin{tikzcd}[column sep = 4em,ampersand replacement=\&]
    Tx \ar[rr, "g^{*}"] \ar[ddrr, bend right=15, "(g;Th)^{*}"'] \ar[ddrr, "(g;(h;\eta_z)^{*})^{*}", pos=0.4]\&\& Ty \ar[dd ,"(h;\eta_z)^{*}"'] \ar[dd, bend left, "Th"] \\
   \&\& \\
   \&\& Tz.\& \qedhere 
\end{tikzcd} \]
\end{itemize}
\end{proof}

Now we recover the two versions from the literature as special cases via the constructions of bimodules from functors in Definition \ref{def:forback}.

Let $\catc$ and $\catd$ be categories and consider a functor $J \ccolc \catc \rightarrow \catd$. $\forbim{J}$-relative monads are exactly the \enquote{relative monads} introduced by Altenkirch, Chapman and Uustalu in~\cite{ACU}.  For example, let $J$ be the embedding of the small category of (a skeleton of) finite sets and functions into $\Set$.  Then a $\forbim{J}$-relative monad, also called a \enquote{cartesian operad} or \enquote{abstract clone}, corresponds to a finitary monad $T$ on $\Set$.  We think of $T([n])$ as the set of all $n$-ary operations.  This example generalizes to the notion of \enquote{monad with arities}~\cite{arities}.

For a functor $U \ccolc \catd \rightarrow \catc$, $\revbim{U}$-relative monads are exactly the kind introduced by Spivey in~\cite{Spivey}.  To illustrate this case, let $U$ be the forgetful functor from $\mathbf{Grp}$ (the category of all groups) to $\Set$.  Since $U$ is faithful, we see that a $\revbim{U}$-relative monad is a Kleisli triple (or monad) on $\Set$ equipped with a group structure on $TA$ for each set $A$, such that the Kleisli extension of each function $A \rightarrow TB$ is a homomorphism $TA \rightarrow TB$. 
Since this functor $U$ has a left adjoint $F$ (viz.\ the free group functor), the notion of $\revbim{U}$-relative monad coincides with that of $\forbim{F}$-relative monad. To show that the backward-relative monad really is an independent notion, let us see an example where $U$ has no left adjoint.

Define a \emph{dynamical complete lattice} to be a complete lattice 
equipped with an endofunction, 
called its \emph{dynamics}.  Write $\dynsup$ for the category of all dynamic complete lattices and \emph{sup-homomorphisms}, i.e.\ functions that preserve all suprema and commute with the dynamics.

\begin{proposition} \label{prop:noinit}
    $\dynsup$ has no initial object.
\end{proposition}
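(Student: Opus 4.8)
The plan is to argue by contradiction: suppose $\dynsup$ has an initial object $(L,d)$, and produce a family of test objects that force the carrier $L$ to be larger than any set. The key realization is that although initiality only supplies maps \emph{out} of $(L,d)$ — which does not obviously bound the size of $L$ from below — the interplay between the dynamics and sup-preservation forces each such map to be surjective onto a target of our choosing, and by choosing targets of unbounded cardinality we contradict the fact that $L$ is a set.

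Concretely, for each ordinal $\kappa$ I would take the target $(\kappa+1, e)$, where the carrier $\kappa+1$ is the ordinal regarded as a complete chain (every subset has a supremum given by its least upper bound, the empty supremum being $0$), and the dynamics $e$ is the ``capped successor'': $e(\beta) = \beta+1$ for $\beta < \kappa$ and $e(\kappa) = \kappa$. This is a legitimate object of $\dynsup$. By initiality there is a sup-homomorphism $h \colon (L,d) \to (\kappa+1,e)$, and I claim its image is all of $\kappa+1$. Indeed, the image contains $0$, since $h$ preserves the empty supremum and so $h(\bot_L) = 0$; it is closed under $e$, since $e(h(x)) = h(d(x))$; and it is closed under arbitrary suprema, since $h$ preserves them. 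A routine transfinite induction then shows that the smallest subset of $\kappa+1$ containing $0$ and closed under the capped successor and under suprema is $\kappa+1$ itself: successor ordinals are reached via $e$, and every limit ordinal $\lambda \le \kappa$ is the supremum of its predecessors. Hence $h$ is surjective, giving $|L| \ge |\kappa+1|$. As $\kappa$ ranges over all ordinals this makes $L$ a proper class, contradicting the requirement that an object of $\dynsup$ have a complete lattice, and in particular a set, as carrier.

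The step I expect to be the crux is the choice of test objects. Using complete lattices without dynamics would fail, since the initial complete lattice for sup-homomorphisms is the one-point lattice, whose unique map to any target hits only the bottom element and so imposes no lower bound on size. The dynamics is exactly what lets the image climb through all the ordinals, and engineering it as a capped successor on an ordinal chain is the device that upgrades ``there exists a map out of $L$'' into ``that map is surjective.'' Checking that $(\kappa+1,e)$ is a complete lattice and that the transfinite induction closes are routine; the genuine insight lies in selecting these targets. (This proposition also yields that $U \colon \dynsup \to \Set$ has no left adjoint, since a left adjoint $F$ would send the empty set to an initial object of $\dynsup$.)
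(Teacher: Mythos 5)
Your proposal is correct and takes essentially the same route as the paper: the test object is the same capped-successor ordinal chain, and your ``image contains $0$ and is closed under the dynamics and under suprema'' induction is just the image-level rephrasing of the paper's explicit recursive construction of preimages $x_\beta$ (the paper fixes a single sufficiently large ordinal via Hartogs' lemma rather than ranging over all $\kappa$, but this is cosmetic).
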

\begin{proof}
Given an initial object $(C,\zeta)$, Hartogs' lemma yields an ordinal $\alpha$ such that there is no injection $\alpha \rightarrow C$. Form the dynamic complete lattice $R$ given by the set of ordinals $\leqslant \alpha$, with dynamics sending $\beta < \alpha$ to $\mathsf{S}\beta$, and $\alpha$ to $\alpha$.  There is a unique sup-homomorphism $h \ccolc (C,\zeta) \rightarrow R$.  Each $\beta \leqslant \alpha$ has an  $h$-preimage $x_{\beta}$, defined recursively via
\begin{spaceout}{rcll}
      x_{\beta} & \eqdef &  \bigvee_{\gamma < \beta} x_{\gamma} & \text{($\beta$ is 0 or a limit)} \\
    x_{\mathsf{S}\beta} & \eqdef & \zeta(x_{\beta}) & \text{otherwise}
\end{spaceout}
Since $\beta \mapsto x_{\beta}$ is a section of $h$, it is injective, contradiction.
\end{proof}
By Proposition~\ref{prop:noinit}, the forgetful functor $U \ccolc \dynsup \rightarrow \Set$ has no left adjoint, as left adjoints preserve initiality. Nonetheless, the story is the same as before: a $\revbim{U}$-relative monad is a Kleisli triple (or monad) on $\Set$  equipped with a dynamical suplattice structure on $TA$ for each set $A$, such that the Kleisli extension of each function $A \rightarrow TB$ is a sup-homomorphism $TA \rightarrow TB$.  For example, let $T$ send each set $A$ to the free (with respect to sup-homomorphisms) dynamic complete lattice on $A$ with dynamics preserving suprema.  Explicitly, this is $\pset(\nats \times A)$ ordered by inclusion,  with dynamics sending $c \smin \pset(\nats \times A)$ to $\setbr{(n+1,x) \mid (n,x) \smin c}$ and unit sending $x \smin A$ to $\setbr{(0,x)}$.

 The following \enquote{naturality} formulation appeared in~\cite{Spivey}.
\begin{proposition} \label{prop:natbackward}
    For a functor $U \ccolc \catd \rightarrow \catc$, a $\revbim{U}$-relative monad can be represented as a functor $T \ccolc \catc \rightarrow \catd$ together with natural transformations  $\eta \ccolc \id_{\catc} \rightarrow UT$ and $\mu \ccolc TUT \rightarrow T$ such that the following diagrams commute:
\[\begin{tikzcd}[column sep = small,ampersand replacement=\&]
    T \ar[r, "T\eta"] \ar[dr, "\id_T"']\& TUT \ar[d, "\mu"]\& UT \ar[r, "\eta UT"] \ar[dr, "\id_{UT}"']\& UTUT \ar[d, "U\mu"] 
   \& TUTUT \ar[r, "\mu UT"] \ar[d, "TU\mu"']\& TUT \ar[d, "\mu"] \\
    \& T\&\& UT\& TUT \ar[r, "\mu"]\& T
\end{tikzcd}\]
\end{proposition}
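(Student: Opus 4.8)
The plan is to set up an explicit back-and-forth translation between the two packages of data and verify it is a bijection that matches the axioms. Throughout I unfold the definition of $\revbim{U}$: an $\cato$-morphism $x \to y$ is precisely a $\catc$-morphism $x \to Uy$, its left action by a $\catc$-morphism is ordinary precomposition in $\catc$, and its right action by a $\catd$-morphism $h$ is postcomposition with $Uh$. By Proposition~\ref{prop:extrelmonad} I may assume from the outset that $T$ is a functor $\catc \to \catd$ and that $\eta \ccolc \id_\catc \to UT$ (the family $\eta_x \ccolc x \to UTx$) is natural, so the functor and one of the two natural transformations come for free.

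The creative step is guessing the remaining datum. Given a $\revbim{U}$-relative monad I set
\[ \mu_x \eqdef (\id_{UTx})^{*} \ccolc TUTx \to Tx, \]
reading $\id_{UTx}$ as the $\cato$-morphism $UTx \to Tx$ sitting inside $\catc(UTx, UTx)$. Conversely, given $(T,\eta,\mu)$ I recover the Kleisli extension of an $\cato$-morphism $k \ccolc x \to Ty$ (that is, a $\catc$-arrow $x \to UTy$) by $k^{*} \eqdef Tk \,;\, \mu_y \ccolc Tx \to Ty$. That these two assignments are mutually inverse is immediate from the extension triangles of Proposition~\ref{prop:extrelmonad}: substituting $\id_{UTy}$ for $g$ in the triangle $Tf \,;\, g^{*} = (f;g)^{*}$ collapses $Tk\,;\,\mu_y$ back to $k^{*}$, while functoriality of $T$ gives $(\id_{UTx})^{*} = \mu_x$ in the other direction.

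It then remains to match the three conditions of Definition~\ref{def:relmonadclass} with the three commuting diagrams and to check that $\mu$ is natural; I expect each to be a one-line rewrite. The right-unit diagram $\eta UT \,;\, U\mu = \id_{UT}$ is exactly the first relative-monad triangle $\eta_a ; k^{*} = k$ taken at $k = \id_{UTx}$. The left-unit diagram $T\eta \,;\, \mu = \id_T$ follows from the extension triangle together with the unit law $\eta_x^{*} = \id_{Tx}$, and naturality of $\mu$ follows from the two extension triangles in the same manner. For the associativity square I would expand both of its composites and show each equals $(U\mu_x)^{*}$: that $\mu_{UTx}\,;\,\mu_x = (U\mu_x)^{*}$ is the relative-monad associativity law $k^{*};l^{*} = (k;l^{*})^{*}$ at $k = \id_{UTUTx}$, $l = \id_{UTx}$, while $TU\mu_x\,;\,\mu_x = (U\mu_x)^{*}$ is again the extension triangle.

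The main obstacle here is bookkeeping rather than depth. Because $\cato = \revbim{U}$ carries an asymmetric two-sided action — the left action is plain $\catc$-composition whereas the right action silently inserts $U$ — one must track carefully where $U$ appears and whether each composite lives in $\catc$, $\catd$, or $\cato$; a single misplaced $U$ derails the rewrites above. Once the two definitions $\mu_x = (\id_{UTx})^{*}$ and $k^{*} = Tk\,;\,\mu_y$ are fixed and this discipline is maintained, every axiom reduces to a mechanical application of one extension triangle or one relative-monad law, and the equivalence of the two presentations follows.
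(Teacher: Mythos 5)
Your proof is correct, but it takes a genuinely different route from the paper's. The paper does not verify Proposition~\ref{prop:natbackward} directly: it defers the justification to Section~4.3, where it first establishes Theorem~\ref{thm:bimodule} (an $\cato$-relative monad is a monoid in a certain left-skew multicategory of functors and extranatural families), then specializes to $\cato = \revbim{U}$ and applies the Yoneda lemma to rewrite a loose morphism $T_0,\dotsc,T_{n-1} \to H$ as a natural transformation $UT_0\cdots UT_{n-1}\to UH$ and a tight morphism $\lti{S,T_0,\dotsc,T_{n-1}}\to H$ as a natural transformation $SUT_0\cdots UT_{n-1}\to H$; under this dictionary the monoid data $(T,m,e)$ and its three laws become exactly $(T,\mu,\eta)$ and the three displayed diagrams, so the proposition falls out ``immediately.'' You instead work directly from Definition~\ref{def:relmonadclass} and Proposition~\ref{prop:extrelmonad}, setting $\mu_x = (\id_{UTx})^{*}$ and $k^{*} = Tk;\mu_y$ and checking the axioms translate — the classical Kleisli-triple-to-monad argument adapted to the relative setting. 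Your version is more elementary and self-contained (it needs none of the skew-multicategory machinery), at the cost of the careful $U$-bookkeeping you rightly flag; the paper's version buys brevity and a conceptual explanation of \emph{why} the naturality presentation exists, since it exhibits the proposition as an instance of the monoid picture that is the point of the paper. One small thing to make explicit in a full write-up: your axiom-matching is stated in the direction ``relative-monad law at a special instance gives the diagram,'' and for the converse you must derive the \emph{general} laws of Definition~\ref{def:relmonadclass} (e.g.\ $\eta_x;k^{*}=k$ for arbitrary $k$, and $k^{*};l^{*}=(k;l^{*})^{*}$) from the three diagrams; this works, but it uses the naturality of $\eta$ and of $\mu$ as essential hypotheses, not just the diagrams themselves, so it deserves a sentence rather than being folded into ``one-line rewrites.''
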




\subsection{Monoids in a Skew Multicategory}

We now present Bourke and Lack's notion of left-skew multicategory~\cite{BL}, which uses two kinds of morphism, dubbed \enquote{tight} and \enquote{loose}.  Perhaps surprisingly, this will allow us to view $\cato$-relative monad as a monoid notion---Theorem~\ref{thm:bimodule} below.

Recall that we write $\lista, \listb, \ldots$ for possibly empty object lists, and $\listap, \listbp, \ldots$ for nonempty object lists.
\begin{definition}
    A \emph{left-skew multicategory} $\catc$ consists of the following data.
    \begin{itemize}
        \item A class  $\obs{\catc}$ of objects.
        \item For any object list $\lista$ and object $x$, a set of \emph{loose} morphisms $\catc(\lista;x)$.  Such a morphism is written $f \ccolc \lista \rightarrow x$ or pictorially as 
            \[\nti{f}{~\ora{a}~}{x}\]
        \item For any nonempty object list $\listap$ and object $x$, a set of \emph{tight} morphisms $\catc(\lti{\listap};x)$.  Such a morphism is written $f \ccolc \lti{\listap} \rightarrow x$ or pictorially as
            \[\ntile*{f}{\listap}{x}\]
        \item \hiitem{Loosening} An operation sending each tight morphism $f \ccolc \lti{\listap} \rightarrow x$ to a loose morphism $\lloo{f} \ccolc \listap \rightarrow x$:
            \[ \ntile*{f}{\listap}{x} \mapsto \nti*{\lloo{f}}{~\listap~}{x}.\]
        \item \hiitem{Tight identities} For any object $x$, a morphism
            \[\ntile*{\id}{x}{x}.\]
        \item \hiitem{Loose composition (A)} For any objects lists $\lista, \listb, \listc$ and objects $x,y$ and (loose) morphisms $f \ccolc \listb \rightarrow x$ and $g \ccolc \lista,x,\listc \rightarrow y$, a composite 
        \[\nti*[2]{g {\circ_x} f}{~\lista,~\listb,~\listc~}{y} ~\eqdef~
        \nti*{g}{\lista,\nti{f}{~\listb~}{x},\listc}{y}\]
        \item \hiitem{Tight composition (B)} For any nonempty object list $\listap$, object list $\listb$, objects $x,y$ and morphisms $f \ccolc \lti{\listap} \rightarrow x$ and $g \ccolc \lti{x},\listb \rightarrow y$, a composite
            \[  \ntile*[2]{g {\circ_x} f}{\listap,\nsp\listb~}{y} ~\eqdef~
            \ntile*{g}[\ntile{f}{\listap}{x}]{,\listb~}{y} \]
        \item \hiitem{Mixed composition (C)} For any nonempty object list $\listap$, objects lists $\listb,\listc$, objects $x,y$ and morphisms $f \ccolc \listb \rightarrow x$ and $g \ccolc \lti{\listap},x,\listc \rightarrow y$, a composite
            \[ \ntile*[2]{g {\circ_x} f}{\listap,~\listb,~\listc}{y} ~\eqdef~
            \ntile*{g}{\listap,\nti{f}{~\listb~}{x},\listc}{y} \]
        \end{itemize}
        The following conditions must be satisfied.
        \begin{itemize}
        \item Loosening commutes with tight composition and with mixed composition.  
        \item Left identity for each kind of composition: \emph{A}, \emph{B}, \emph{C}.
        \item Right identity for each kind of morphism: loose and tight.
         \item Associativity laws:
         \begin{math}
             \as{A}{A}, \as{B}{B}, \as{A}{C}, \as{A}{B}.
         \end{math} 
         \item Interchange laws: 
         \begin{math}
             \ch{A}{A}, \ch{B}{C}, \ch{C}{C}.
         \end{math}
        \end{itemize}
        Letters \emph{A}, \emph{B}, \emph{C} indicate the composition forms. Here and in Definition \ref{def:biskew}, the associativity and interchange laws impose that either order in which the respective forms of composition may be performed produces the same result.
        \end{definition}

Any left skew-monoidal category $(\catc,\otimes,I,\alpha,\lambda,\rho)$ gives a left skew multicategory with the same objects: a loose morphism $f \ccolc \lista \to b$ is a $\catc$-morphism $\bigotimes \lista \rightarrow b$, and a tight morphism $f \ccolc \lti{x,\lista} \to b$ is a $\catc$-morphism $\bigotimes_x \lista \rightarrow b$ (using notation from Definition \ref{def:lsmonoidal}).  Loosening is defined by composing with the left unitor $\lambda_x: I \otimes x \to x$ in the first argument.  Bourke and Lack showed that the left skew multicategories so obtained are the \emph{left-representable} ones~\cite[Theorem 6.1]{BL}.

\begin{definition}
\label{def:lerep}
In a left-skew multicategory, a \emph{tight left tensor} of a non-empty list of objects $\listap$ consists of an object $v$ and tight morphism $p \ccolc \lti{\listap} \to v$ such that, for any lists of objects $\listb$, object $x$ and morphism $f \ccolc \lti{\listap}, \listb \to x$, there exists a unique morphism $g \ccolc \lti{v},\listb \to x$ with:
\[
    \ntile*[2]{f}{\listap,~\listb}{x}
    ~=~
    \ntile*{g}[\ntile{p}{\listap}{v}]{,~\listb~}{x}
\]
Similarly, a \emph{loose left tensor} of a list $\lista$ consists of an object $v$ and a loose morphism $p \ccolc \lista \to v$ such that, for any lists of objects $\listb$, object $x$ and morphism $f \ccolc \lista, \listb \to x$, there exists a unique tight morphism $g \ccolc \lti{v},\listb \to x$ with:
\[
    \nti*[2]{f}{\lista,~\listb}{x}
    ~=~
    \nti*{\lloo{g}}[\nti{p}{~\lista~}{v}]{,~\listb~}{x}
\]
Note that this is weaker than the condition imposed in Definition \ref{def:representable}, but that these left tensors are nonetheless unique up to unique tight isomorphism.  A left-skew multicategory is \textbf{left-representable} when every nonempty object list has a tight left tensor and every object list a loose left tensor.
\end{definition}

\begin{remark}
Offord provides string diagram syntax for left-skew multicategories in \cite{offord} which is essentially equivalent to our trapezate syntax. Offord also includes syntactic operations for tensors in order to specialize to the representable case.
\end{remark}
        
Now let us assemble a 2-category of left-skew multicategories.  Given left-skew multicategories $\catc$ and $\catd$, a \emph{map} $F \ccolc \catc \rightarrow \catd$ sends objects to objects, loose morphisms to loose morphisms, and tight morphisms to tight morphisms in the obvious way.  Given maps  $F,G \ccolc \catc \rightrightarrows \catd$, a \emph{2-cell} $\alpha \ccolc F \Rightarrow G$ sends each $\catc$-object $x$ to a tight map $\alpha_x \ccolc \lti{Fx} \to Gx$, satisfying the evident naturality condition for every (loose or tight) $\catc$-morphism.

We may now generalize the notion of monoid (Definition~\ref{def:monoidmulti}) to the skew setting.
\begin{definition}
\label{def:leftskewmonoids}
    Let $\catc$ be a left-skew multicategory.  
    \begin{enumerate}
        \item A \emph{monoid} in $\catc$ consists of a $\catc$-object $x$, a  tight map $m \ccolc \lti{x,x} \to x$ (the \emph{multiplication}), and a nullary map $e \ccolc \to x$ (the \emph{unit}), satisfying the following equations.
        
\[ \nti*{\lloo{m}}{\nti{e}{\nsp}{~x~},x}{x} ~=~ \nti*[2]{\lloo{\id}}{~x~}{x}
\longquad
\ntile*{m}{x,}[\nti{e}{\nsp}{~x~}]{x} ~=~ \ntile*[2]{\id}{x}{x}
\longquad
\ntile*{m}[\ntile{m}{x,x}{x}]{, x}{x} ~=~ \ntile*{m}{x,}[\nti{\lloo{m}}{~x,x~}{x}]{x} \]%
    \item  For monoids $M = (x,m,e)$ and $N = (y,n,u)$, a  \emph{homomorphism} $M \rightarrow N$ is a tight map $f \ccolc \lti{x} \to y$ satisfying
    \[ \nti*{\lloo{f}}{\nti{e}{\nsp}{~x~}}{y} ~=~ \nti*[2]{u}{\nsp}{~y~} \quad \quad \quad \quad
    \ntile*{f}[\ntile{m}{x,x}{x}]{\hspace{-3pt}}{y} ~=~ \ntile*{n}[\ntile{f}{x}{y}]{,~\nti{\lloo{f}}{~x~}{y}}{y} \]%
  \item We write $\monsof{\catc}$ for the category of monoids and homomorphisms in $\catc$.
    \end{enumerate}
\end{definition}

\subsection{Examples of Monoids in Left-skew Multicategories}

First we explain how multicategories are a special case of left-skew multicategories.
\begin{definition} \label{def:leftskewify}
Any multicategory $\catc$ gives a left-skew multicategory $\leftskewify (\catc)$ with the same objects.  Both the loose and the tight hom-sets are just $\catc$-hom-sets, and loosening is identity.  Composition and identities are inherited from $\catc$.
We extend $\leftskewify$ to a 2-functor in the obvious way.
\end{definition}

\begin{proposition} \label{prop:leftskewify}
For a multicategory $\catc$, $\monsof{\catc}$ is isomorphic to $\monsof{\leftskewify (\catc)}$.
\end{proposition}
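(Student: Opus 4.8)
The plan is to show that, under the identifications built into $\leftskewify$, a monoid in $\leftskewify(\catc)$ \emph{is} a monoid in $\catc$, with the same underlying data and the same defining equations, so that the two categories coincide on the nose (and are \textit{a fortiori} isomorphic).

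First I would record the relevant features of $\leftskewify(\catc)$ from Definition~\ref{def:leftskewify}: both the loose and the tight hom-sets are the corresponding $\catc$-hom-sets, the loosening operation $\lloo{(-)}$ is the identity, the tight identities are the $\catc$-identities, and each of the three composition forms (A), (B), (C) is the single composition of $\catc$. Consequently the data of a monoid in $\leftskewify(\catc)$ --- an object $x$, a tight map $m \ccolc \lti{x,x} \to x$, and a nullary map $e \ccolc \to x$ --- is precisely an object $x$ together with $\catc$-morphisms $m \ccolc x,x \to x$ and $e \ccolc \to x$, which is exactly the data of a monoid in $\catc$ (Definition~\ref{def:monoidmulti}).

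Next I would match the axioms. Replacing $\lloo{(-)}$ by the identity and every skew composition by $\catc$-composition, the skew left-unit law $\lloo{m} \circ_x e = \lloo{\id}$ (with $e$ in the first input) becomes the multicategory left-identity equation; the skew right-unit law (with $e$ in the second input and the tight identity on the right) becomes the right-identity equation; and the skew associativity law becomes the multicategory associativity equation. The same reduction applies to morphisms: a homomorphism in $\leftskewify(\catc)$ is a tight map $f \ccolc \lti{x} \to y$, i.e.\ a unary $\catc$-morphism $x \to y$, and its two defining equations collapse to the two homomorphism equations of Definition~\ref{def:monoidmulti}.

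Finally I would conclude that the identity-on-data assignment is functorial in both directions and mutually inverse, yielding the desired isomorphism $\monsof{\catc} \cong \monsof{\leftskewify(\catc)}$. There is no genuine obstacle: the entire content lies in the definition of $\leftskewify$, and the only care needed is the bookkeeping of which input slot carries $e$ and which copy of $m$ is loosened in the unit and associativity laws --- but since loosening does nothing, every skew equation reduces transparently to its multicategory counterpart.
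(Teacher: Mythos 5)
Your proposal is correct and matches the paper's (implicit) argument: the paper states this proposition without proof precisely because, as you observe, $\leftskewify$ makes loosening the identity and identifies tight with loose hom-sets, so the monoid data, axioms, and homomorphism conditions in $\leftskewify(\catc)$ collapse verbatim to those of Definition~\ref{def:monoidmulti}. Nothing further is needed.
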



Now we turn to our main example of a monoid in a left-skew multicategory:
    \begin{theorem}
    \label{thm:bimodule}
    Let $\cato$ be a bimodule from a category $\catc$ to a category $\catd$.  Then an $\cato$-relative monad is precisely a monoid in the following hyperlarge left-skew multicategory.
    \begin{itemize}
    \item An object is a functor $\catc \rightarrow {\catd}$.
    \item A morphism $f \ccolc T_0,\ldots,T_{n-1} \rightarrow H$ is an extranatural\footnote{See~\cite{extranat} for the  definition of extranaturality.} family of maps
    \begin{displaymath}
        \cato(x_{0},T_{0} x_{1}) \times \cdots \times \cato(x_{n-1},T_{n-1} x_{n}) \rightarrow 
    \cato(x_{0}, Hx_{n})
    \end{displaymath}
    \item A morphism $f \ccolc \lti{S, T_0,\ldots,T_{n-1}} \rightarrow H$ is an extranatural family of maps
    \begin{displaymath}
        \cato(x_{0},T_{0} x_{1}) \times \cdots \times \cato(x_{n-1},T_{n-1} x_{n}) \rightarrow 
    \catd(Sx_{0}, Hx_{n})
    \end{displaymath}
    \item Loosening, identities and composition are defined in the evident way.
    \end{itemize}
\end{theorem}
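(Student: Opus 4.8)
The plan is to exhibit a bijective correspondence between $\cato$-relative monads and monoids in the displayed left-skew multicategory by matching the data and the laws piece by piece, and I expect the only genuine subtlety to be reconciling the functoriality of the carrier with the object-only data of a relative monad.

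First I would unwind the three pieces of monoid data. A monoid carrier is an object, i.e.\ a functor $T \ccolc \catc \to \catd$. The unit $e \ccolc\, \to T$ is a nullary loose morphism, hence (tracing the definition, with the two endpoints forced to coincide) an extranatural choice of element $\unitof{x} \smin \cato(x,Tx)$ for each $x \smin \catc$; this is exactly the relative-monad unit $\eta$, and its extranaturality is precisely the wedge condition $u;\unitof{x'} = \unitof{x};Tu$, which is naturality of $\eta$ as in the first square of Proposition~\ref{prop:extrelmonad}. The multiplication $m \ccolc \lti{T,T} \to T$ is a tight morphism with $n=1$, hence a family $\cato(x_0,Tx_1) \to \catd(Tx_0,Tx_1)$ sending each $k$ to a $\catd$-morphism $Tx_0 \to Tx_1$, which I read as the Kleisli extension $\exof{k}$.

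Next I would compute the three monoid laws, first making the \enquote{evident} loosening explicit as $\lloo{f}(g_0,g_1,\dots) \eqdef g_0;f(g_1,\dots)$ using the right $\catd$-action on $\cato$, so that $\lloo{m}(g_0,g_1) = g_0;\exof{g_1}$. With this in hand: the left-identity law (plugging $e$ into the first input of the loosened multiplication) computes to $\unitof{x};\exof{k} = k$, the first relative-monad axiom; the right-identity law (plugging $e$ into the second input of $m$) computes to $\exof{(\unitof{x})} = \id$, the second axiom; and the associativity law, comparing the tight composite $\exof{k};\exof{l}$ with the mixed composite $\exof{(k;\exof{l})}$, is exactly the third axiom. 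Each identity is forced in both directions, so the laws match on the nose.

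The main obstacle, and the last step, is the carrier mismatch: a monoid carries a \emph{functor} $T$, whereas a relative monad specifies $T$ only on objects. From a monoid to a relative monad I simply forget the action of $T$ on morphisms; going back, I would invoke Proposition~\ref{prop:extrelmonad} to extend the object-function to a functor. To see this is coherent, I would observe that the two remaining extranaturalities of $m$ encode exactly the second and third diagrams of Proposition~\ref{prop:extrelmonad}: extranaturality in $x_0$ gives $\exof{(f;g)} = Tf;\exof{g}$, and extranaturality in $x_1$ gives $\exof{(g;Th)} = \exof{g};Th$. Setting $g = \unitof{x}$ in the former and using the already-established $\exof{(\unitof{x})} = \id$ forces $Tf = \exof{(f;\unitof{x})}$, which is precisely the canonical extension; hence the functorial structure carried by the monoid is uniquely determined and agrees with Proposition~\ref{prop:extrelmonad}. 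This shows the correspondence is well defined in both directions and completes the identification.
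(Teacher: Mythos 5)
Your proposal is correct and follows the same route as the paper, whose entire proof is ``Immediate from Proposition~\ref{prop:extrelmonad}'': you simply make explicit the definitional unwinding the paper leaves to the reader, and you resolve the carrier mismatch (functor versus object-only assignment) by appeal to that same proposition, exactly as intended. Your additional observation that the two extranaturality conditions on $m$ force $Tf = \exof{(f;\unitof{x})}$ is precisely the uniqueness clause of Proposition~\ref{prop:extrelmonad}, so nothing is missing.
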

\begin{proof}
    Immediate from Proposition~\ref{prop:extrelmonad}.
\end{proof}

Consider the above left-skew multicategory in the case that $\cato$ has the form $\revbim{U}$ for a functor $U \ccolc \catd \rightarrow \catc$.  The Yoneda lemma allows us to represent:
\begin{itemize}
    \item a morphism $f \ccolc T_0,\ldots,T_{n-1} \rightarrow H$ as a natural transformation $UT_0\cdots UT_{n-1} \rightarrow UH$
    \item a morphism $f \ccolc \lti{S, T_0,\ldots,T_{n-1}} \rightarrow H$ as a natural transformation $SUT_0 \cdots UT_{n-1} \rightarrow H$
\end{itemize}
with loosening, identities and composition given in the evident way.  
This immediately yields Proposition~\ref{prop:natbackward}.

\section{Advanced Level: Bi-skew Multicategory}
\label{sec:biskew}

\subsection{Digression: Call-By-Push-Value Sequencing}
\label{ssec:CBPV}

We leave the multicategory generalization story for a while, and turn to the $F$ fragment of CBPV (introduced in Section~\ref{sect:plintro}) and its categorical semantics.  Later we shall explain how we see this categorical structure as another monoid example.

To formulate the $F$ fragment as a formal calculus, we require a set of base value types and a set of base computation types.  We then say that a value type is just a base value type, and a computation type (underlined) is either a base computation type or $FA$ for a value type $A$.  We next require
\begin{itemize}
\item a set of value constructors $f \ccolc (A_i)_{i < n} \rightarrow B$
\item a set of computation constructors $g \ccolc  (A_i)_{i < n}\rightarrow \ulb$.
\end{itemize}
The colllection of base types and term constructors shall be called a \emph{CBPV signature}, written $\cats$.  It gives rise to a calculus, via the typing rules in Figure~\ref{fig:Flaws} and additionally
\begin{displaymath}
    \begin{array}{c}
         \begin{prooftree}
             (\Gamma \vdashv V_i : A_i)_{i < n}
             \using f \ccolc (A_i)_{i < n} \rightarrow B
             \justifies
             \Gamma \vdashv f(V_i)_{i < n} : B
         \end{prooftree} \longquad
         \begin{prooftree}
             (\Gamma \vdashv V_i : A_i)_{i < n}
             \using g \ccolc (A_i)_{i < n} \rightarrow \ulb
             \justifies
             \Gamma \vdashc g(V_i)_{i < n} : \ulb
         \end{prooftree}
    \end{array}
\end{displaymath}
We call this calculus $\retseq{\cats}$.  For contexts $\Gamma$ and $\Delta$, a \emph{substitution} $k \ccolc \Gamma \rightarrow \Delta$ assigns to each identifier declaration $(x:A) \smin \Gamma$ a value $\Delta \vdashv k_x : A$.  For any such substitution $k$ and term $M$ in context $\Gamma$, we obtain a term $k^*M \eqdef M[k_x/x]_{(x:A) \in \Gamma}$ in context $\Delta$.

Next we give categorical structure that models $\retseq{\cats}$.  This is very different from the CBPV categorical semantics in the literature~\cite{Levy:thesisbook}, both because we are modelling only a small fragment of CBPV, and because we seek to crudely mimic  the syntactic description.  

Our categorical semantics is given in two parts.  The first part models types, contexts,  values and substitutions, and the second computations.

First, a \emph{cartesian base} for CBPV modelling consists of a cartesian category $\catc$, classes $X$ and $Y$, and functions 
\[\sppan{\obs{\catc}}{Y}{X}{j}{F}\]
Our intended interpretation is as follows.
\begin{itemize}
\item A value type $A$ will denote an element $\seman{A} \smin X$.
\item A computation type $\ulb$ will denote an element $\ulb \smin Y$, using
\begin{eqnarray*}
    \seman{FA} & = & F\seman{A} \phantom{F}
\end{eqnarray*}
\item A typing context $\Gamma$ will denote a $\catc$-object $\seman{\Gamma}$, using
\begin{eqnarray*}
    \seman{\mathsf{nil}} & = & 1 \\
    \seman{\Gamma, x:A} & = & \seman{\Gamma} \times j \seman{A} 
\end{eqnarray*}
\item A value $\Gamma \vdashv V:A$ will denote a $\catc$-morphism $\seman{\Gamma} \rightarrow j\seman{A}$.  
\item A substitution $k \ccolc \Gamma \rightarrow \Delta$ will denote a $\catc$-morphism $\seman{k} \ccolc \seman{\Delta} \rightarrow \seman{\Gamma}$.
\end{itemize}
The second part of our semantics closely resembles Figure~\ref{fig:Flaws}.
\begin{definition} \label{def:seqmodel}
    On a cartesian base $(\catc,X,Y,j,F)$, a \emph{RetSeq model} consists of the following data.
    \begin{itemize}
        \item A bimodule $\cato$ from $\catc$ to $Y$.
    \item For each $A \smin X$, a function
    \begin{displaymath}
 \eretof{\Gamma}{A} \ccolc      \catc(\Gamma, jA)  \rightarrow \cato(\Gamma, FA)
 \end{displaymath}
    natural in $\Gamma \smin \catc$.
    \item For each $A \smin X$ and $\ulb \smin Y$, a  function 
    \begin{displaymath}
    \etoof{\Gamma}{A}{\ulb} \ccolc    \cato(\Gamma, FA) \times \cato(\Gamma \times jA, \ulb) \rightarrow \cato(\Gamma, \ulb)
    \end{displaymath}
natural in $\Gamma\smin \catc$. 
    \end{itemize}
      The following laws must be satisfied.
   \begin{itemize}
       \item For any $\catc$-morphism $V \ccolc \Gamma \rightarrow j(A)$ and $\cato$-morphism $M \ccolc \Gamma \times jA \rightarrow \ulb$, 
       we have
       \begin{eqnarray*}
       (\retof{\Gamma}{A} \af V)  \af\toof{\Gamma}{A}{\ulb} \af M  & = & 
\tuple{\id_{\Gamma},V} \,;\,M \phantom{\pi}
       \end{eqnarray*}
  \item For any $\cato$-morphism $M \ccolc \Gamma \rightarrow FA$, we have 
  \begin{eqnarray*}
      M \af \toof{\Gamma}{A}{FA}\af(\retof{\Gamma\times A}{A} \af \pipof{\Gamma}{A})  & = & M \phantom{\af \toof{\Gamma}{A}{FA}\af(\retof{\Gamma\times A}{A} \af \pipof{\Gamma}{A})}
  \end{eqnarray*}
  \item For any $\cato$-morphisms $M \ccolc \Gamma \rightarrow FA$ and $N \ccolc \Gamma \times jA\rightarrow FB$ and $P \ccolc \Gamma \times jB \rightarrow \ulc$, we have
\begin{eqnarray*}
    \phantom{F(\pi_{\Gamma}) \times j_B;P} (M \af \toof{\Gamma}{A}{FB}\af N) \af\toof{\Gamma}{B}{\ulc} \af P
    & = & M \af\toof{\Gamma}{A}{FB} \af (N \af \toof{\Gamma \times jA}{B}{\ulc} \af
    (\piof{\Gamma}{jA} \times jB;P))
\end{eqnarray*}
\end{itemize}
\end{definition}

Thus a computation $\Gamma \vdashc M : \ulb$ is interpreted as an $\cato$-morphism $\seman{M} \ccolc \seman{\Gamma} \rightarrow \seman{\ulb}$ via
\begin{eqnarray*}
    \seman{\ttproduce V} & = & \retof{\seman{\Gamma}}{\seman{A}} \af \seman{V} \\
    \seman{M \ttto x \ttin N} & = & \seman{M} \af\toof{\seman{\Gamma}}{\seman{A}}{\seman{\ulb}} \af\seman{N}
\end{eqnarray*}
for $\Gamma \vdashv V :A$ and $\Gamma \vdashc M : FA$ and $\Gamma, x:A \vdashc N :\ulb$.  Intuitively, the naturality requirements on $\mathsf{ret}$ and $\mathsf{to}$ correspond to the fact that the term constructors $\mathtt{return}$ and $\mathtt{to}$ commute with substitution.  Note that, in the right-hand side of the associative law in Figure~\ref{fig:Flaws}, the term $P$ is implicitly weakened by $x$, and this is reflected in the right-hand side of the associative law in Definition~\ref{def:seqmodel}.

Given a RetSeq model $(\cato,\mathsf{ret}, \mathsf{to})$ on a cartesian base $(\catc,X,Y,j,F)$, an \emph{interpretation} of a signature $\cats$ interprets base value types in $X$ and base computation types in $Y$, so that every type and context has a denotation, and interprets value constructors and computation constructors as $\catc$-morphisms and $\cato$-morphisms, respectively.  We then obtain the denotation of all terms and substitutions by induction.  Our claim is that the three laws in Figure~\ref{fig:Flaws} are validated; this is proved in the usual way, using a substitution lemma.

To summarize, we have put the syntactic description of $\retseq{\cats}$ into categorical form.

\subsection{The Framework}

To bring together all our examples, we give a new definition generalizing both left-skew and right-skew multicategory.
\begin{definition}\label{def:biskew}
    A \emph{bi-skew multicategory} $\catc$ consists of the following data:
    \begin{itemize}
        \item A class  $\obs{\catc}$ of objects.
        \item For any object list $\lista$ and object $x$, a set of \emph{loose-loose} morphisms $\catc(\lista;x)$.  Such a morphism is written $f \ccolc \lista \rightarrow x$ or pictorially as,
        \[ \nti{f}{~\ora{a}~}{x}\] %
        \item For any nonempty object list $\listap$ and object $x$, a set of \emph{tight-loose} morphisms $\catc(\lti{\listap};x)$ and a set of \emph{loose-tight} morphisms $\catc(\rti{\listap};x)$.  Morphisms of these types are respectively denoted $f \ccolc \lti{\listap} \rightarrow x$ and $f \ccolc \rti{\listap} \rightarrow x$, or pictorially as,
        \[\ntile*{f}{\listap}{x} \quad \text{and} \quad \ntiri*{f}{\listap}{x}\]
        \item For any nonempty object list $\listap$ and object $x$, a set of \emph{tight-tight} morphisms $\catc(\lrti{\listap};x)$.  Such a morphism is written $f \ccolc \lrti{\listap} \rightarrow x$ or pictorially as,
        \[\ntilr*{f}{\listap}{x}\]
        \item \hiitem{Left-loosening} An operation sending
        \[\ntile*{f}{\listap}{x} ~\mapsto \nti*{\lloo{f}}{~\listap~}{x}
        \quad \text{ and } \quad
        \ntilr*{f}{\listap}{x} ~\mapsto \ntiri*{\lloo{f}}{\listap}{x}.\]
        \item \hiitem{Right-loosening} An operation
        \[\ntiri*{f}{\listap}{x} ~\mapsto \nti*{\rloo{f}}{~\listap~}{x}
        \quad \text{ and } \quad
        \ntilr*{f}{\listap}{x} ~\mapsto \ntile*{\rloo{f}}{\listap}{x}.\]
        \item \hiitem{Tight-tight identities} For each $\catc$-object $x$, $\id \ccolc \lrti{x} \to x$.
        \item \hiitem{Composition} Considering the forms in Table \ref{tab:biskew}, the following conditions must be satisfied.
        \begin{itemize}
        \item Left-loosening and right-loosening commute, i.e.\ for any $f \ccolc \lrti{\listap} \to x$, the loose-loose morphisms $\rloo{\lloo{f}}$ and $\lloo{\rloo{f}}$ are equal---we write either as  $\lrloo{f}$.
        \item Left-loosening commutes with {\rm B, D, E, F, I}.
        \item Right-loosening commutes with {\rm C, D, F, H, I}.
           \item Left identity for each kind of composition.
        \item Right identity for each kind of morphism.
         \item Associativity laws:
         \begin{math}
             \as{E}{E}, \as{G}{G}, \as{E}{F}, \as{G}{H}, \as{F}{I}, \as{H}{I}, \as{I}{I}
         \end{math}
         \item Interchange laws:
         \begin{math}
             \ch{A}{A}, \ch{E}{B}, \ch{B}{B}, \ch{C}{G}, \ch{C}{C}, \ch{F}{H}, \ch{F}{D}, \ch{D}{H}, \ch{D}{D}
         \end{math}
        \end{itemize}
        \end{itemize}
\begin{table}
    \centering
    \caption{Composition forms in a bi-skew multicategory. The labels are used in the associativity and interchange laws in Definition~\ref{def:biskew}.}
    \begin{tabular}{ll|ll|ll|ll|ll}
        A 
        & $\nti{g}{\lista,\nti{f}{~\listb~}{x},\listc}{y}$ & 
        B 
        & $\ntilesans{g}{\listap,\nti{f}{~\listb~}{x},\listc}{y}$ & 
        C 
        & $\ntirisans{g}{\lista,\nti{f}{~\listb~}{x},\listcp}{y}$ &
        D 
        & $\ntilr{g}{\listap,\nti{f}{~\listb~}{x},\listcp}{y}$ & 
        E 
        & $\ntilesans{g}[\ntile{f}{\listap}{x}]{,~\listb}{y}$ \\
        \midrule
        F 
        & $\ntilr{g}[\ntile{f}{\listap}{x}]{,~\listbp}{y}$ &
        G 
        & $\ntirisans{g}{\lista,~}[\ntiri{f}{\listb}{x}]{y}$ & 
        H 
        &  $\ntilr{g}{\listap,~}[\ntiri{f}{\listb}{x}]{y}$ & 
        I 
        & $\ntilrunary{g}{\ntilr{f}{\listap}{\lrti{x}}}{y}$
    \end{tabular}
    \label{tab:biskew}
\end{table}
\end{definition}
We assemble bi-skew multicategories into a 2-category.  Given bi-skew multicategories $\catc$ and $\catd$, a \emph{map} $F \ccolc \catc \rightarrow \catd$ sends objects to objects, and (for each kind) morphisms to morphisms, in the obvious way.  Given maps  $F,G \ccolc \catc \rightrightarrows \catd$, a \emph{2-cell} $\alpha \ccolc F \Rightarrow G$ sends each $\catc$-object $x$ to a tight-tight map $\alpha_x \ccolc \lrti{Fx} \to Gx$, satisfying the naturality condition for $\catc$-morphisms of each kind. 

Now we come to the main definition of the paper.
\begin{definition}
\label{def:biskewmonoid}
    Let $\catc$ be a bi-skew multicategory.  
    \begin{enumerate}
        \item A \emph{monoid} in $\catc$ consists of a $\catc$-object $x$, a  map $m \ccolc \lrti{x,x} \to x$ (the \emph{multiplication}), and a nullary map $e \ccolc ~~\to x$ (the \emph{unit}), satisfying the following equations:
    \[ \ntiri*{\lloo{m}}{\nti{e}{\nsp}{x},x}{x} = \ntiri*[2]{\lloo{\id}}{x}{x}
    \longquad 
    \ntile*{\rloo{m}}{x, \nti{e}{\nsp}{x}}{x} = \ntile*[2]{\rloo{\id}}{x}{x}
    \longquad
    \ntilr*{m}[\ntile{\rloo{m}}{x,x}{x}]{, x}{x} ~= \ntilr*{m}{x,}[\ntiri{\lloo{m}}{x,x}{x}]{x} \]
    \item  For monoids $M = (x,m,e)$ and $N = (y,n,u)$, a  \emph{homomorphism} $M \rightarrow N$ is a map $f \ccolc \lrti{x} \to y$ satisfying
    \[
        \nti*{\lrloo{f}}{\nti{e}{\nsp}{\,x\,}}{y} ~= \nti*[2]{u}{\nsp}{\,y\,} \longquad
        \ntilrunary*{f}{\ntilr{m}{x,x}{\lrti{x}}}{y} ~= \ntilr*{n}[\ntile{\rloo{f}}{x}{y}]{\nsp}[\ntiri{\lloo{f}}{x}{y}]{y}
    \]%
  \item We write $\monsof{\catc}$ for the category of all  monoids in $\catc$ and homomorphisms.
    \end{enumerate}
\end{definition}

As in Definition~\ref{def:leftskewify} and Proposition~\ref{prop:leftskewify}, we must see that this generalizes our previous notions:
\begin{definition}
Any left-skew multicategory $\catc$ gives a bi-skew multicategory $\biskewify (\catc)$ with the same objects.  Both the loose-loose and loose-tight hom-sets are just loose $\catc$-hom-sets, and right-loosening is identity.  Likewise, the tight-loose and tight-tight hom-sets are just tight $\catc$-hom-sets, and left-loosening is loosening.  Composition and identities are inherited from $\catc$. We extend $\biskewify$ to a 2-functor in the obvious way.
\end{definition}

\begin{proposition}
\label{prop:biskewify}
For a left-skew multicategory $\catc$, $\monsof{\catc}$ is isomorphic to $\monsof{\biskewify (\catc)}$.
\end{proposition}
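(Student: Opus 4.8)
The plan is to produce an isomorphism of categories by showing that, once the definition of $\biskewify(\catc)$ is unfolded, a monoid (resp.\ homomorphism) in $\biskewify(\catc)$ is literally the same datum as a monoid (resp.\ homomorphism) in $\catc$, and that the two families of defining equations coincide. Since the assignment will be the identity on underlying objects, multiplications, units and maps, it automatically preserves composition and identities, so it suffices to establish the bijection on objects and on morphisms.

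First I would unfold the structure of $\biskewify(\catc)$. By construction a tight-tight morphism $\lrti{x,x}\to x$ is exactly a tight $\catc$-morphism $\lti{x,x}\to x$, a nullary loose-loose morphism is exactly a nullary loose $\catc$-morphism, and a tight-tight map $\lrti{x}\to y$ is a tight $\catc$-map $\lti{x}\to y$; hence the data of a monoid $(x,m,e)$ and of a homomorphism $f$ transport back and forth verbatim, and the tight-tight identity corresponds to the tight identity. I would then record how the loosenings act: right-loosening is the identity and left-loosening is the left-skew loosening, so $\rloo{m}$ is the tight morphism $m$ itself (viewed tight-loose), $\lloo{m}$ is its loosening (viewed loose-tight), $\lrloo{m}$ is again its loosening, and similarly $\rloo{f}=f$ while $\lloo{f}$ and $\lrloo{f}$ are the loosening of $f$.

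The central step would be to set up a dictionary translating the composition forms of Table~\ref{tab:biskew}, as instantiated in $\biskewify(\catc)$, into those of $\catc$. Because the loose-loose and loose-tight hom-sets are loose $\catc$-hom-sets whereas the tight-loose and tight-tight hom-sets are tight $\catc$-hom-sets, every bi-skew form collapses: forms A, C and G become loose composition (A); forms E, F and I become tight composition (B); and forms B, D and H become mixed composition (C). Armed with this dictionary I would translate the three equations of Definition~\ref{def:biskewmonoid} one at a time. The left-identity law is a form-C composite of $\lloo{m}$ with $e$ against $\lloo{\id}$, and under the dictionary it becomes, term for term, the loose-composition left-identity law of Definition~\ref{def:leftskewmonoids}; the right-identity law is a form-B composite of $\rloo{m}=m$ with $e$ against $\rloo{\id}=\id$, becoming the mixed-composition right-identity law; and the associativity law pairs a form-F composite (a tight composite of $m$ with $m$) against a form-H composite (a mixed composite of $m$ with $\lloo{m}$), yielding exactly the left-skew associativity law.

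The same dictionary disposes of homomorphisms: unit preservation is a form-A composite of $\lrloo{f}$ with $e$, reducing to the left-skew unit law, while multiplication preservation pairs a form-I composite of $f$ with $m$ against a form-F composite together with a form-H composite of $n$ with $\rloo{f}=f$ and $\lloo{f}$, reducing to the left-skew multiplication law. Hence the assignment is a bijection on objects and morphisms that preserves identities and composition on the nose, giving the isomorphism $\monsof{\catc}\cong\monsof{\biskewify(\catc)}$. I expect the sole difficulty---and the only place an error could creep in---to be the bookkeeping of this composition-form dictionary together with the observation that right-loosening is trivial; once those are pinned down, every bi-skew equation matches its left-skew counterpart verbatim and the verification is mechanical.
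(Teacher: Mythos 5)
Your proposal is correct: the paper states this proposition without proof, treating it as immediate from the definition of $\biskewify$, and your unfolding—identifying the hom-sets, noting that right-loosening is the identity, and translating each bi-skew composition form (A, C, G to loose composition; E, F, I to tight composition; B, D, H to mixed composition) so that the monoid and homomorphism equations match verbatim—is exactly the verification the paper leaves implicit. Your composition-form dictionary and the resulting correspondence of each equation check out.
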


Combining this result with Proposition \ref{prop:leftskewify}, it follows that the notion of monoid in a bi-skew multicategory subsumes the previously presented notions.

\subsection{Example: Category on a Class Span}

As a gentle example, recall from Section~\ref{sect:mathintro} the notion of a category on a class span. How can such a category be seen as a monoid?  We answer this as follows.
\begin{theorem}
A category on a class span $(E,A,B,f,g)$ is precisely a monoid in the following bi-skew multicategory.
\begin{itemize}
    \item An object is a set-valued matrix $X \ccolc A \relarrow B$.
 \item A morphism $X_0, \ldots, X_{n-1} \rightarrow Y$ consists of a function
 \begin{displaymath}
     X_0(fi_0,gi_{1}) \etimes \cdots \etimes X_{n-1}(fi_{n-1}, gi_{n}) \erto Y(fi_0,gi_{n})
 \end{displaymath}
 for all $i_0,\ldots, i_n \smin E$.
 \item A morphism $\lti{W,X_0, \ldots, X_{n-1}} \rightarrow Y$ consists of a function
 \begin{displaymath}
   W(a,gi_0) \etimes X_0(f i_0 ,g i_{1}) \etimes \cdots \etimes X_{n-1}(fi_{n-1}, gi_{n}) \erto Y(a,gi_{n})
 \end{displaymath}
 for all $a \smin A$ and $i_0,\ldots, i_n \smin E$.
 \item A morphism $\rti{X_0, \ldots, X_{n-1},V} \rightarrow Y$ consists of a function
 \begin{displaymath}
     X_0(fi_0,gi_{1}) \etimes \cdots \times X_{n-1}(fi_{n-1}, gi_{n}) \etimes V(fi_n, b) \erto Y(fi_0,b)
 \end{displaymath}
 for all $i_0,\ldots, i_n \smin E$ and $b \smin B$.
 \item A morphism $\lrti{W,X_0, \ldots, X_{n-1},V} \rightarrow Y$ consists of a function
 \begin{displaymath}
   W(a,gi_0) \etimes X_0(f i_0 ,g i_{1}) \etimes \cdots \etimes X_{n-1}(fi_{n-1}, gi_{n}) \etimes V(fi_n,b) \erto Y(a,b)
 \end{displaymath}
 for all $a \smin A$ and $i_0,\ldots, i_n \smin E$ and $b \smin B$.
 \item A morphism $\lrti{W} \rightarrow Y$ consists of a function
 \begin{displaymath}
     W(a,b) \erto Y(a,b)
 \end{displaymath}
 for all $a \smin A$ and $b \smin B$.
 \item Left-loosening, right-loosening, composition and identities are defined in the evident way.
\end{itemize}
\end{theorem}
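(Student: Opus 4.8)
The plan is to produce a bijection between categories on the span $(E,A,B,f,g)$ and monoids in the displayed bi-skew multicategory, by first matching the underlying data and then checking that the three equations of Definition~\ref{def:biskewmonoid} unwind exactly to the three laws of Figure~\ref{fig:catonspanlaws}.

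I would begin with the data dictionary. A monoid's carrier is an object, namely a set-valued matrix $X \ccolc A \relarrow B$, which is precisely the family of hom-sets $(\catc(a,b))_{a\smin A, b \smin B}$ with $X(a,b) = \catc(a,b)$. Reading the morphism descriptions at the relevant arity, a tight-tight multiplication $m \ccolc \lrti{X,X} \to X$ (the case of an empty middle list) is exactly a family of functions $X(a,gi) \etimes X(fi,b) \erto X(a,b)$ indexed by $a \smin A$, $i \smin E$, $b \smin B$, i.e.\ the composition data $;_{a,i,b}$; and a nullary loose-loose unit $e \ccolc {}\to X$ is exactly a choice of element of $X(fi,gi)$ for each $i \smin E$, i.e.\ the family of identities $\id_i$. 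Because morphisms in this multicategory are bare families of functions, with no extra naturality imposed, these assignments are bijective on the nose.

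The crux of the argument is to make the two loosening operations explicit. Comparing the domain of a tight-loose morphism with that of the corresponding loose-loose morphism shows that left-loosening specializes the free left index $a \smin A$ to $f(i)$ for a fresh $E$-index $i$; dually, right-loosening specializes the free right index $b \smin B$ to $g(j)$. Consequently $\lloo{m} \ccolc \rti{X,X} \to X$ sends $(h,l)$ to $h;l$ on $X(fi_0,gi_1)\etimes X(fi_1,b)$, $\rloo{m} \ccolc \lti{X,X} \to X$ sends $(h,k)$ to $h;k$ on $X(a,gi_0)\etimes X(fi_0,gi_1)$, and each of $\lloo{\id}$, $\rloo{\id}$ is an identity function on the correspondingly specialized hom-set.

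With this in hand I would unwind the three monoid equations in turn, reading off which composition form of Table~\ref{tab:biskew} each uses. The left-unit equation plugs the nullary $e$ into the first loose slot of $\lloo{m}$ (form C) and equates the result with $\lloo{\id}$, giving $\id_i ; l = l$ for $l \ccolc fi \rightarrow b$, the left identity law; symmetrically, the right-unit equation (form B) yields $h ; \id_i = h$ for $h \ccolc a \rightarrow gi$. The associativity equation compares $m$ with $\rloo{m}$ substituted into its first tight slot (form F), computing $(h;k);l$, against $m$ with $\lloo{m}$ substituted into its last tight slot (form H), computing $h;(k;l)$; over the common tight-tight domain $X(a,gi_0)\etimes X(fi_0,gi_1)\etimes X(fi_1,b)$ this is precisely the associativity law, with $i = i_0$ and $j = i_1$. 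Every step is a reversible reading of the definitions, so the data bijection carries monoid equations to span laws in both directions, proving the claim. I expect the only real difficulty to be the index bookkeeping in the loosenings and composites, together with correctly identifying which of the nine composition forms each equation invokes; once the two loosening rules above are pinned down, each equation reduces mechanically.
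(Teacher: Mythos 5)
Your proposal is correct: the paper states this theorem without proof, treating it as a routine unwinding of Definition~\ref{def:biskewmonoid} in the given bi-skew multicategory, and your verification supplies exactly that argument. In particular your data dictionary (multiplication as $;_{a,i,b}$, nullary unit as $(\id_i)_{i\smin E}$), your identification of left-/right-loosening as specializing the free $A$- or $B$-index along $f$ or $g$, and your matching of the three monoid equations (via composition forms C, B, and F/H) to the three laws of Figure~\ref{fig:catonspanlaws} are all accurate.
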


\subsection{Example: Modelling of CBPV Sequencing}

Recall from Section~\ref{ssec:CBPV} that a RetSeq model on a cartesian base is used to interpret returning and sequencing in CBPV.  How can this structure be seen as a monoid?

To answer this question, we consider representable presheaves.  For a category $\catc$, the hyperlarge category $[\catc^{\op},\Set]$ is called the category of \emph{left $\catc$-modules}, or \emph{presheaves}.  Any $\catc$-object $a$ gives a presheaf $\catc(-,a)$, which is called \emph{representable}\footnote{Beware that this is distinct from the notion of representability for multicategories introduced in Definitions \ref{def:representable} and \ref{def:lerep}.}.  In the case that $\catc$ is cartesian, we can always exponentiate by a representable presheaf.
\begin{proposition} \label{prop:longshort}
    Let $\catc$ be a cartesian category.  For any left $\catc$-module $H$ and $\catc$-object $a$, the exponential of $H$ by $\catc(-,a)$ is $H(- \times a)$.  This means that we have a bijective correspondence, natural in the left module $G$, between 
\begin{itemize}
    \item natural transformations from $\catc(-,a) \times G$ to $H$, which we call \enquote{long-form}
    \item natural transformations from $G$ to $H(- \times a)$, which we call \enquote{short-form}.
\end{itemize}
Explicitly: a long-form $\alpha$ corresponds to the short-form that, at a $\catc$-object $x$, sends $r  \ccolc x \rightarrow \cdot$ to $\alpha_{x \times a}(\pi'_{x,a},\pi_{x,a};r)$.  Conversely, a short-form $\beta$ corresponds to the long-form that, at $\catc$-object $y$, sends $f \ccolc y \rightarrow a$ and $s \ccolc y \rightarrow \cdot$ to $\tuple{\id_y,f};(\beta_y(s))$.
\end{proposition}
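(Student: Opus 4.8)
The plan is to read this as an explicit description of the cartesian closed structure of the presheaf category $[\catc^{\op},\Set]$. Abstractly, for any object $x$ one has $H^{\catc(-,a)}(x)\cong\mathrm{Nat}(\catc(-,x)\times\catc(-,a),H)$; since $\catc$ is cartesian, $\catc(-,x)\times\catc(-,a)\cong\catc(-,x\times a)$, and Yoneda then gives $\mathrm{Nat}(\catc(-,x\times a),H)\cong H(x\times a)$, whence $H^{\catc(-,a)}\cong H(-\times a)$. However, since the category of left $\catc$-modules is hyperlarge, I would avoid invoking a general cartesian-closedness theorem and instead verify the stated bijection directly. Write $\Phi$ for the long-to-short assignment $\alpha\mapsto\beta$ with $\beta_x(r)=\alpha_{x\times a}(\pipof{x}{a},\,\piof{x}{a};r)$, and $\Psi$ for the short-to-long assignment $\beta\mapsto\alpha$ with $\alpha_y(f,s)=\tuple{\id_y,f};\beta_y(s)$.

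First I would check that $\Phi$ and $\Psi$ actually produce natural transformations. For $\Phi(\alpha)$ one must commute $\beta$ with the action of $H(-\times a)$ on a morphism $u$, which reduces via naturality of $\alpha$ together with the compatibility of $u\times a$ with the projections $\piof{}{},\pipof{}{}$; for $\Psi(\beta)$ one argues symmetrically from naturality of $\beta$ and the pairing equations. Naturality in $G$ is then immediate from the formulas: for a module map $\theta\ccolc G\to G'$, the required square commutes because $\theta$ enters only through its components $\theta_x$, and the needed identity $\theta_{x\times a}(\piof{x}{a};r)=\piof{x}{a};\theta_x(r)$ is precisely naturality of $\theta$.

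The core of the argument is that $\Phi$ and $\Psi$ are mutually inverse. For $\Psi(\Phi(\alpha))_y(f,s)=\tuple{\id_y,f};\alpha_{y\times a}(\pipof{y}{a},\piof{y}{a};s)$, I would apply naturality of $\alpha$ along $u=\tuple{\id_y,f}\ccolc y\to y\times a$, then use the projection equations $\tuple{\id_y,f};\pipof{y}{a}=f$ and $\tuple{\id_y,f};\piof{y}{a}=\id_y$ to collapse the expression to $\alpha_y(f,s)$. For the other composite, $\Phi(\Psi(\beta))_x(r)=\tuple{\id_{x\times a},\pipof{x}{a}};\beta_{x\times a}(\piof{x}{a};r)$, I would apply naturality of $\beta$ along $\piof{x}{a}\ccolc x\times a\to x$, rewriting it as $\bigl(\tuple{\id_{x\times a},\pipof{x}{a}};(\piof{x}{a}\times a)\bigr);\beta_x(r)$.

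The only genuinely nontrivial piece of bookkeeping — the main obstacle, such as it is — is verifying the cartesian identity $\tuple{\id_{x\times a},\pipof{x}{a}};(\piof{x}{a}\times a)=\id_{x\times a}$, which collapses the previous expression to $\beta_x(r)$ and completes $\Phi\circ\Psi=\id$. This follows by expanding $\piof{x}{a}\times a$ as a pairing and applying the two projection equations for $\tuple{\id_{x\times a},\pipof{x}{a}}$, leaving $\tuple{\piof{x}{a},\pipof{x}{a}}=\id_{x\times a}$. Everything else is routine manipulation of projections and pairings in $\catc$, so beyond the two Yoneda-style naturality steps no new idea is required.
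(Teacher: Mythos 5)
Your proposal is correct and matches the paper's approach: the paper's proof is simply ``by direct calculation,'' and your explicit verification that $\Phi$ and $\Psi$ are well-defined, natural in $G$, and mutually inverse (via the two Yoneda-style naturality steps and the cartesian identity $\tuple{\id_{x\times a},\pi'_{x,a}};(\pi_{x,a}\times a)=\id_{x\times a}$) is exactly that calculation carried out in full. Your remark about avoiding a general cartesian-closedness theorem for the hyperlarge module category is also well taken.
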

\begin{proof}
    By direct calculation.
\end{proof}

\begin{theorem}
On a cartesian base $(\catc,X,Y,j,F)$, a  RetSeq model is precisely a monoid in the following bi-skew multicategory.  
\begin{itemize}
    \item An object is a bimodule from $\catc$ to $Y$.
   \item A morphism $\cato_0, \ldots, \cato_{n-1} \rightarrow \Omega$ is a choice of functions,
 \begin{displaymath}
  \cato_0(\Gamma \times jA_0, FA_1) \times \cdots  \times \cato_{n-1}(\Gamma \times jA_{n-1}, FA_{n}) \erto \Omega(\Gamma \times jA_{0}, FA_n)
 \end{displaymath}
 natural in $\Gamma \in \catc$, for all $A_0,\ldots,A_{n} \smin X$.  
 \item A morphism $\lti{\Phi,\cato_0, \ldots, \cato_{n-1}} \rightarrow \Omega$ is a choice of functions,
\begin{displaymath}
 \Phi(\Gamma, FA_0) \etimes 
 \cato_0(\Gamma \times jA_0, FA_1) \times \cdots \times \cato_{n-1}(\Gamma \times jA_{n-1}, FA_{n}) \erto \Omega(\Gamma, FA_n)
 \end{displaymath}
 natural in $\Gamma \in \catc$, for all $A_0,\ldots,A_{n} \smin X$.
 \item A morphism $\rti{\cato_0, \ldots, \cato_{n-1},\Psi} \rightarrow \Omega$ is a choice of functions,
 \begin{displaymath}
\cato_0(\Gamma \times jA_0, FA_1) \times \cdots \times \cato_{n-1}(\Gamma \times jA_{n-1}, FA_{n})
\times \Psi(\Gamma \times jA_n, \ulb)
\erto \Omega(\Gamma \times jA_{0}, \ulb)
 \end{displaymath}
 natural in $\Gamma \in \catc$, for all $A_0,\ldots,A_{n} \smin X$ and $\ulb \smin Y$.
 \item A morphism $\lrti{\Phi,\cato_0, \ldots, \cato_{n-1},\Psi} \rightarrow \Omega$ is a choice of functions,
 \begin{displaymath}
\Phi(\Gamma, FA_0) \etimes
 \cato_0(\Gamma \times jA_0, FA_1) \times \cdots  \times \cato_{n-1}(\Gamma \times jA_{n-1}, FA_{n})
 \times \Psi(\Gamma \times jA_n, \ulb) \erto \ \Omega(\Gamma, \ulb)
 \end{displaymath}
 natural in $\Gamma \in \catc$, for all $A_0,\ldots,A_{n} \smin X$ and $\ulb \smin Y$.
 \item A morphism $\lrti{\Phi} \rightarrow \Omega$ is a choice of functions,
 \begin{displaymath}
\Phi(\Gamma, \ulb) \ \erto \ \Omega(\Gamma,\ulb)
 \end{displaymath}
  natural in $\Gamma \in \catc$, for all $\ulb \smin Y$.
 \item Left-loosening, right-loosening, composition and identities are defined in the evident way.
\end{itemize}
\end{theorem}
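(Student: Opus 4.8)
The plan is to fix a bimodule $\cato$ from $\catc$ to $Y$ and establish a bijection between the monoid structures on $\cato$ in the displayed bi-skew multicategory and the RetSeq models carried by $\cato$ in the sense of Definition~\ref{def:seqmodel}.

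First I would match the underlying data. A multiplication $m \ccolc \lrti{\cato,\cato} \to \cato$ is a tight-tight binary morphism, which by the explicit description is exactly a family
\[
    \cato(\Gamma, FA) \etimes \cato(\Gamma \times jA, \ulb) \erto \cato(\Gamma, \ulb)
\]
natural in $\Gamma$, for all $A \smin X$ and $\ulb \smin Y$; this is precisely the sequencing operation $\etoof{\Gamma}{A}{\ulb}$, so the multiplication \emph{is} $\mathsf{to}$ on the nose. A unit $e \ccolc {} \to \cato$ is a nullary loose-loose morphism, which unwinds to an element of $\cato(\Gamma \times jA, FA)$ natural in $\Gamma$, for each $A \smin X$. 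This is exactly the \enquote{short-form} of Proposition~\ref{prop:longshort} taken with $H = \cato(-,FA)$, $a = jA$, and $G = \terminal$; its long-form partner is a natural family $\catc(\Gamma, jA) \to \cato(\Gamma, FA)$, which is the returning operation $\eretof{\Gamma}{A}$. Thus the unit corresponds bijectively to $\mathsf{ret}$, and explicitly $\eretof{\Gamma}{A}(V) = \tuple{\id_\Gamma, V};e_\Gamma$, where $e_\Gamma \smin \cato(\Gamma \times jA, FA)$ denotes the unit.

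Next I would pin down the \enquote{evident} left-loosening, right-loosening, and the composition forms of Table~\ref{tab:biskew} in the instances required; each amounts to reindexing a natural family along a cartesian projection and substituting one family into another. With these fixed, I would translate the three monoid equations of Definition~\ref{def:biskewmonoid} in turn. The left-identity law, in which the unit sits in the left slot of a left-loosened multiplication, should become RetSeq law~1: converting the short-form unit to its long form turns the composite into $(\eretof{\Gamma}{A}V)\af\etoof{\Gamma}{A}{\ulb}\af M$, while the right-hand side $\lloo{\id}$ delivers $\tuple{\id_\Gamma, V};M$. The right-identity law, with the unit in the right slot of a right-loosened multiplication, should become RetSeq law~2, the projection $\pipof{\Gamma}{A}$ arising because right-loosening reindexes the unit into the weakened context $\Gamma \times jA$, so that it becomes $\eretof{\Gamma\times A}{A}\af\pipof{\Gamma}{A}$. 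Finally the associativity law should become RetSeq law~3; this case involves no unit and is a direct comparison of the two bracketings of $\mathsf{to}$, with the weakening $\piof{\Gamma}{jA}\times jB;P$ on the right-hand side emerging from the context bookkeeping of the two composition orders.

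The main obstacle is precisely this bookkeeping of contexts in the loosening and composition operations, together with the need to reconcile the unit---naturally presented in short form as an element of $\cato(\Gamma \times jA, FA)$---with the long-form returning operation. Arranging for the projections $\piof{\Gamma}{jA}$ and $\pipof{\Gamma}{A}$ and the tupling $\tuple{\id_\Gamma, V}$ to appear in exactly the positions demanded by Definition~\ref{def:seqmodel} is where the real content lies; Proposition~\ref{prop:longshort} is the tool that makes the unit/return translation mechanical once the evident operations have been fixed.
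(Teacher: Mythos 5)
Your proposal is correct and follows essentially the same route as the paper: the paper likewise identifies the tight-tight binary morphism with $\mathsf{to}$ and uses Proposition~\ref{prop:longshort} to pass between the nullary loose-loose morphism (an element of $\cato(\Gamma\times jA,FA)$ natural in $\Gamma$, i.e.\ a ``short-form'' unit) and the long-form $\eretof{\Gamma}{A}$, restating the three laws of Definition~\ref{def:seqmodel} in short form so that they become exactly the monoid equations. The only difference is direction of travel (the paper converts the RetSeq model to short form first, you convert the monoid's unit to long form), which is immaterial.
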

\begin{proof}
    A morphism $\cato_0, \ldots, \cato_{n-1} \rightarrow \Omega$ can be expressed in long-form as a function
 \begin{displaymath}
  \catc(\Gamma, jA_0) \times
  \cato_0(\Gamma \times jA_0, FA_1) \times \cdots  \times \cato_{n-1}(\Gamma \times jA_{n-1}, FA_{n}) \erto \Omega(\Gamma, FA_n),
 \end{displaymath}
 natural in $\Gamma \in \catc$, for all $A_0,\ldots,A_{n} \smin X$.  Likewise a morphism $\rti{\cato_0, \ldots, \cato_{n-1},\Psi} \rightarrow \Omega$ can be expressed in long-form as a function
 \begin{displaymath}
\catc(\Gamma, jA_0) \times
\cato_0(\Gamma \times jA_0, FA_1) \times \cdots  \times \cato_{n-1}(\Gamma \times jA_{n-1}, FA_{n})
\times \Psi(\Gamma \times jA_n, \ulb) \erto \   \Omega(\Gamma, \ulb)
 \end{displaymath}
 natural in $\Gamma \in \catc$, for all $A_0,\ldots,A_{n} \smin X$ and $\ulb \smin Y$.  As such, we use the construction of Proposition~\ref{prop:longshort} to convert Definition~\ref{def:seqmodel} as follows.  A \emph{short-form RetSeq model} consists of the following data.
    \begin{itemize}
        \item A bimodule $\cato$ from $\catc$ to $Y$.
    \item For each $A \smin X$, a function
$\shortretof{\Gamma}{A} \ccolc 1 \rightarrow \catc(\Gamma \times jA, FA)$ natural in $\Gamma$.
\item For each $A \smin X$ and $\ulb \smin Y$, a  function 
    \begin{displaymath}
    \etoof{\Gamma}{A}{\ulb} \ccolc    \cato(\Gamma, FA) \times \cato(\Gamma \times jA, \ulb) \rightarrow \cato(\Gamma, \ulb)
    \end{displaymath}
natural in $\Gamma\smin \catc$. 
    \end{itemize}
      The following laws must be satisfied.
   \begin{itemize}
       \item For any  $\cato$-morphism $M \ccolc \Gamma \times jA \rightarrow \ulb$, 
       we have
       \begin{eqnarray*}
\shortretoof{\Gamma}{A} \af \toof{\Gamma \times jA}{A}{\ulb} \af(\pi_{\Gamma,jA} \times jA);M & = & M
       \end{eqnarray*}
  \item For any $\cato$-morphism $M \ccolc \Gamma \rightarrow FA$, we have 
  \begin{eqnarray*}
      M \af \toof{\Gamma}{A}{FA}\af \shortretoof{\Gamma}{A}   & = & M
  \end{eqnarray*}
  \item Associativity as in Definition~\ref{def:seqmodel}.  
\end{itemize}
This is easily seen to be a monoid in the stated bi-skew multicategory.
\end{proof}




\section{Unbiased Monoids}
\label{sec:unbiased}

We now give an alternative definition of monoid, which explicitly includes $n$-ary composition for every $n \smin \nats$, not just for $n=0$ and $n=2$.  The term \enquote{unbiased} is taken from~\cite{Leinster:highopshighcats}.
\begin{definition}
\label{def:unbiased}
For a multicategory (resp. left-skew, right-skew or bi-skew multicategory) $\catc$, the \emph{category of unbiased monoids}, written $\unbmon{\catc}$, is the hom-category (in the respective 2-category of multicategories) from the terminal object $1$ to $\catc$.
\end{definition}
Let us unpack this definition.  For $n \smin \nats$ and an object $x$, we write $\xn$ for the list consisting of $x$ repeated $n$ times.
\begin{itemize}
    \item In a multicategory $\catc$, an unbiased monoid consists of the following data. 
    \begin{itemize}
        \item A $\catc$-object $x$.
        \item For each $n \smin \nats$, a $\catc$-morphism $\multof{n} \ccolc \xn\rightarrow x$, known as \emph{$n$-ary multiplication}. 
       \end{itemize} 
       These must be compatible with identities (i.e.\ $m_1 = \id_x$) and composition.  The \emph{biased counterpart} is the monoid $(x,\multof{2},\multof{0})$.  Lastly, a  \emph{homomorphism} from $(x, \ldots)$ to $(y, \ldots)$ is a $\catc$-morphism $x \rightarrow y$ compatible with each kind of multiplication.
    \item  In a left-skew multicategory $\catc$, an unbiased monoid consists of the following data.
    \begin{itemize}
          \item A $\catc$-object $x$.
        \item For each $n \smin \nats$, a $\catc$-morphism $\lmultof{n} \ccolc \xn\rightarrow x$.
    \item For each $n > 0$, a $\catc$-morphism $\tmultof{n} \ccolc \lti{\xn} \rightarrow x$.
       \end{itemize} 
       These must be compatible with loosening, identities, and each kind of composition.   The \emph{biased counterpart} is the monoid $(x,\tmultof{2},\lmultof{0})$.  Lastly, a  \emph{homomorphism} from $(x, \ldots)$ to $(y, \ldots)$ is a $\catc$-morphism $\lti{x}  \rightarrow y$ compatible with each kind of multiplication.
    \item In a bi-skew multicategory $\catc$, an unbiased monoid consists of the following data.
    \begin{itemize}
          \item A $\catc$-object $x$.
        \item For each $n \smin \nats$, a $\catc$-morphism $\llmultof{n} \ccolc \xn\rightarrow x$.
    \item For each $n > 0$, $\catc$-morphisms $\tlmultof{n} \ccolc \lti{\xn} \rightarrow x$, $\ltmultof{n} \ccolc \rti{\xn} \rightarrow x$ and $\ttmultof{n} \ccolc \lrti{\xn} \rightarrow x$.
       \end{itemize} 
       These must be compatible with loosening, identities, and each kind of composition.   The \emph{biased counterpart} is the monoid $(x,\ttmultof{2},\llmultof{0})$.  Lastly, a \emph{homomorphism} from $(x, \ldots)$ to $(y, \ldots)$ is a $\catc$-morphism $\lrti{x}  \rightarrow y$ compatible with each kind of multiplication.  
\end{itemize}
(In the last case, the reason the biased counterpart is a monoid is that both sides of the left identity law are $\ltmultof{1}$, both sides of the right identity $\tlmultof{1}$ and both sides of the associativity law are $\ttmultof{3}$.  Similarly for the other cases.)

\begin{theorem}[Coherence for monoids]
\label{thm:unbiased}
Let $\catc$ be a multicategory, left-skew multicategory, right-skew multicategory or bi-skew multicategory.  Then we have an isomorphism $\unbmon{\catc} \cong \monsof{\catc}$ sending an unbiased monoid to its biased counterpart and a homomorphism to itself.
\end{theorem}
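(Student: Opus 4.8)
The plan is to produce mutually inverse functors between $\unbmon{\catc}$ and $\monsof{\catc}$, the forward one being the assignment of biased counterparts already described above. Since an unbiased monoid is \emph{by definition} a map $1 \to \catc$, and the terminal object $1$ (in whichever of the four $2$-categories we work) has exactly one morphism in each hom-set, the governing idea is that $1$ is the \enquote{walking monoid}: its unique object carries a canonical monoid structure given by its unique binary and nullary morphisms, and every morphism of $1$ is a composite of these together with identities. I would first record that taking biased counterparts is well-defined, which is precisely the parenthetical remark preceding the theorem: each monoid law holds because both of its sides coincide with a single $n$-ary multiplication (e.g.\ in the bi-skew case both sides of associativity are $\ttmultof{3}$, and the identity laws collapse to $\ltmultof{1}$ and $\tlmultof{1}$).

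For the inverse I would, given a biased monoid $(x,m,e)$, define the multiplication family by recursion on arity in a fixed normal form: set $\multof{0} = e$, $\multof{1} = \id$, and $\multof{n+1} = m \circ_1 \multof{n}$ (substitution of $\multof{n}$ into the first input), so that automatically $\multof{2} = m$. The crux is a generalized-associativity lemma: for all $n,k \smin \nats$ and every admissible insertion position $i$ one has $\multof{n} \circ_i \multof{k} = \multof{n+k-1}$. This is proved by induction on arity, rebracketing with the associativity law and deleting units with the two identity laws; the base cases are exactly the three monoid equations. Granting the lemma, the family is compatible with composition and identities, hence assembles into a map $1 \to \catc$, giving the functor $\monsof{\catc} \to \unbmon{\catc}$. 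The round-trip $\monsof{\catc} \to \unbmon{\catc} \to \monsof{\catc}$ is the identity since $(x,\multof{2},\multof{0}) = (x,m,e)$; the other round-trip is the identity because an unbiased monoid already satisfies the instances $\multof{n+1} = \multof{2} \circ_1 \multof{n}$ of its own composition-compatibility, so induction forces its multiplications to equal the normal form rebuilt from $(\multof{2},\multof{0})$. On morphisms, a homomorphism is a unary (resp.\ tight, tight-tight) map satisfying the two biased conditions, whereas a $2$-cell from $1$ is a natural family; the same coherence shows naturality with respect to every $\multof{n}$ is equivalent to naturality with respect to $\multof{2}$ and $\multof{0}$, so homomorphisms are sent to themselves in both directions and the correspondence is an isomorphism of categories.

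The left-skew, right-skew and bi-skew cases follow the identical strategy, the right-skew case being dual to the left. The only difference is bookkeeping: there are now several families related by loosening—$(\lmultof{n},\tmultof{n})$ in the left-skew case and $(\llmultof{n},\tlmultof{n},\ltmultof{n},\ttmultof{n})$ in the bi-skew case—together with the several composition forms labelled in Table~\ref{tab:biskew}. I would define each family by the analogous normal-form recursion (absorbing tight factors with the tight or tight-tight multiplication, and passing between families via left- and right-loosening), then prove the corresponding generalized-associativity statement by a single simultaneous induction ranging over all morphism types and all composition forms at once.

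The main obstacle is this coherence lemma, and concretely its bi-skew incarnation. The simultaneous induction must treat every composition form of Table~\ref{tab:biskew} and verify compatibility with both loosenings, so the case analysis is large even though each individual case, thanks to the normal-form convention, reduces to a single application of an associativity or identity law. I expect no conceptual difficulty beyond arranging the inductive hypotheses so that they line up across the four morphism types; once that bookkeeping is organized, every case is routine.
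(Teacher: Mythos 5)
Your proposal is correct and follows essentially the same strategy as the paper's proof: define the $n$-ary multiplications from $(m,e)$ by a normal-form recursion, establish closure under composition (your generalized-associativity lemma) by induction on arity, and deduce both round-trips from the uniqueness forced by composition-compatibility, with homomorphisms handled by the same coherence. The only divergence is cosmetic --- you left-nest via $m \circ_1 \multof{n}$ where the paper right-nests into the second input, and you package the induction as a single lemma over all insertion positions where the paper first treats the leftmost input and then extends to the others --- neither of which affects the argument.
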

\begin{proof}
We sketch a proof for the bi-skew case: the other cases follow, including the well-known multicategory case.

We firstly show that a monoid $(x,m,e)$ is the biased counterpart of a unique unbiased monoid (dubbed the \enquote{unbiased counterpart}).  To this end, we define the data of the unbiased monoid inductively as follows (for each $n \geq 0$):
\begin{align*}
    \llmultof{0} &\eqdef e & &&
    \llmultof{n+1} &\eqdef \nti*{\lrloo{m}}{x}[\nti{\llmultof{n}}{~x,\dotsc,x~}{x}]{x} 
    &
    \tlmultof{n+1} &\eqdef \ntile*{\rloo{m}}{x}[\nti{\llmultof{n}}{~x,\dotsc,x~}{x}]{x}\\
    \ltmultof{1} &\eqdef \ntiri*{\lloo{\id}}{x}{x}&
    \ttmultof{1} &\eqdef \ntilr*{\id}{x}{x}&
    \ltmultof{n+2} &\eqdef \ntiri*{\lloo{m}}{x}[\ntiri{\ltmultof{n+1}}{x,\dotsc,x}{x}]{x}&
    \ttmultof{n+2} &\eqdef \ntilr*{m}{x}[\ntiri{\ltmultof{n+1}}{x,\dotsc,x}{x}]{x}
\end{align*}
By induction, the requirements on unbiased monoids makes each of these the only possible choice.  Induction also shows that loosening is respected, in the sense that the loosening of $\tlmultof{n+1}$ is exactly $\llmultof{n+1}$, and similarly for the other three possibilities. 
To complete the proof, we must show that these morphisms are closed under composition.  In the first instance, consider a composite:
\[\nti*{g}[\nti{f}{x,\dotsc,x}{x,}]{\dotsc,x}{x}\]
where $f,g$ vary amongst the morphisms defined above. We perform induction on the arity of $f$.
\begin{itemize}
    \item Arity 0: $f = \llmultof{0}$, $g = \llmultof{n+1}$ or $\ltmultof{n+1}$:
    \[\nti*{\lrloo{m}}{\nti{e}{\nsp}{x}~}[\nti{\llmultof{n}}{~x,\dotsc,x~}{x}]{x}
    \quad \quad \quad
    \ntiri*{\lloo{m}}{\nti{e}{\nsp}{x}~}[\ntiri{\ltmultof{n}}{~x,\dotsc,x}{x}]{x}\]
    The left unit law allows us to reduce these to $\llmultof{n}$ and $\ltmultof{n}$, respectively.
    \item Arity 1 (base cases): $f = \ltmultof{1}$ and $g = \ltmultof{1}$ or $f = \ttmultof{1}$ and $g = \ttmultof{1}$. These $f$ are identities, so the composite is just $g$. For $g$ of higher arity, $\ltmultof{1}$ and $\ttmultof{1}$ cannot be composed into the first input without loosening, so these reduce to the previous cases.
    \item Inductive step (loose case): $f = \llmultof{n+1}$ and $g = \llmultof{k+1}$ or $\ltmultof{k+1}$. Expanding the definitions:
    \[\nti*{\lrloo{m}}[
    \nti{\lrloo{m}}{x}[
    \nti{\llmultof{n}}{~x,\dotsc,x~}{x}]{x}]
    [\nsp x \nsp]{~}[
    \nti{\llmultof{k}}{~x,\dotsc,x~}{x}]
    {x}
    \quad \quad \quad
    \ntiri*{\lloo{m}}[%
    \nti{\lrloo{m}}{x}[%
    \nti{\llmultof{n}}{~x,\dotsc,x~}{x}]{x}]%
    [\nsp x \nsp]{~}[%
    \ntiri{\ltmultof{k}}{~x,\dotsc,x}{x}]%
    {x}\]
    In each case we apply (loosened) associativity to the bottom and left morphisms to produce:
    \[\nti*{\lrloo{m}}{x}[
    \nti{\lrloo{m}}[
    \nti{\llmultof{n}}{~x,\dotsc,x~}{x}]{~}[
    \nti{\llmultof{k}}{~x,\dotsc,x~}{x}]{x}]
    {x}
    \quad \quad \quad
    \ntiri*{\lloo{m}}{x}[%
    \ntiri{\lloo{m}}[%
    \nti{\llmultof{n}}{~x,\dotsc,x~}{x}]{~}[%
    \ntiri{\ltmultof{k}}{~x,\dotsc,x}{x}]{x}]%
    {x}\]
    By the induction hypotheses, the upper composites are $\llmultof{k+n}$ and $\ltmultof{k+n}$, respectively, whence the overall composites are $\llmultof{k+n+1}$ and $\ltmultof{k+n+1}$ by definition of the latter.
    \item Inductive step (tight case): the arguments for $f = \tlmultof{n+1}$ are analogous.
\end{itemize}
To complete the proof, we use the above as a base case for the situation where $f$ is composed into an input $i$ further to the right. There are two situations to consider:
\[\nti*{g}[x,\dotsc,][\nsp x,\dotsc,]{\nti{f}{x,\dotsc,x}{x,}}[\dotsc,x][\dotsc,x \nsp]{x}
\qquad \text{and} \qquad
\nti*{g}[x,\dotsc,][\nsp x,\dotsc,]{}[\nti{f}{x,\dotsc,x}{x}][x~]{x}\]
If the index is not the right-most, then $f$ must be $\llmultof{n}$ for some $n$. Observe that the inductive construction is such that we may decompose $g = \ttmultof{k+1}$ as follows:
    \[\ntilr*{\ttmultof{i+1}}{x,\dotsc,}[%
    \ntiri{\ltmultof{k-i+1}}[%
    \nti{\llmultof{n}}{~x,\dotsc,x~}{x}]%
    {,\dotsc,x}{x} \nsp \nsp][\nsp x \nsp]%
    {x}\]
whence the top factor reduces by the previous case, becoming $\ltmultof{k+n-i}$ and the overall composite is thus $\ltmultof{k+n+1}$. The remaining possibilities for $g$ are computed similarly.

Finally, when $f$ is in the right-most argument, the composite is of the required form by the reverse of the argument needed to decompose $g$ above.

It remains to show that a monoid homomorphism is also a homomorphism between the unbiased counterparts, but this is immediate from the way the unbiased counterpart is constructed.
\end{proof}



\section{Conclusion and Further Work}

Progressing through three levels of generality, we have seen that many different notions turn out to be instances of monoids in the suitably expanded context of left-skew or bi-skew multicategories, including that of relative monad, category on a class span, and CBPV RetSeq model (which puts the syntactic description of $\retseq{\cats}$ into categorical form). The key to achieving this was a formalism in which left and right factors are given a special status, realised as a distinction between tight and loose morphisms.

In light of the related work cited throughout, we cannot claim that the resulting notion of monoid (Definition \ref{def:biskewmonoid}) is the maximally general definition that happens to be backwards-compatible with established definitions of monoid.  The kind of multicategory considered in~\cite[Remark 4.5]{Arkor-McDermotupdate}, which one might call `jumbo left-skew', has morphisms whose domain is an arbitrary monoidal expression in the objects, such as $I\otimes (x \otimes (I\otimes I))\otimes (y\otimes z)$, and a single object as codomain. 
On the one hand, our notions of monoid and homomorphism in a left-skew multicategory subsume the corresponding notions in a jumbo left-skew multicategory. On the other hand, the extra structure available in a jumbo left-skew multicategory gives rise to a richer notion of unbiased monoid, and therefore to a coherence question that  Theorem~\ref{thm:unbiased} above cannot answer.  We leave this issue to further work.

While we know of no natural example of a monoid notion that our framework excludes, one could ask if there are further possible applications in programming language theory, or whether a more syntactic treatment of these structures is possible.

We have mentioned Hermida's theorem on representable multicategories~\cite[\S9.2]{Hermida}, and Bourke and Lack's on left-representable left-skew multicategories~\cite[Theorem 6.1]{BL}. Is there an analogous result for bi-skew multicategories?  We do not see one, but further research on this question may be possible.

More generally, considering Bourke and Lack definition of $\catt$-multicategory leads naturally to the question of which structure on a $\Cat$-operad $\catt$ allows one to define a notion of (biased) `monoid in a $\catt$-multicategory', and under what further conditions a coherence theorem in the style of Theorem \ref{thm:unbiased}, relating these to the corresponding notion of unbiased monoid, would hold.

Lastly, we have yet to consider the question, ``What is a monoid action?'' An answer would allow algebras of a relative monad to be treated in a common framework with the algebras for ordinary monads, amongst other examples.  This would have  applications in programming language theory, such as to the syntax with binding of Ahrens \cite{Ahrens}. We expect the appropriate setting to be a suitable notion of multi\textit{act}egory, similarly comparable to the one developed by Arkor and McDermott \cite{Arkor-McDermotupdate}.

\newpage

\bibliographystyle{ACM-Reference-Format}
\bibliography{bibliography}

\end{document}